\begin{document}

\newtheorem{theorem}{Theorem}
\newtheorem{proposition}{Proposition}
\newtheorem{lemma}{Lemma}
\newtheorem{corollary}{Corollary}
\newtheorem{definition}{Definition}
\newtheorem{remark}{Remark}
\newcommand{\tex}{\textstyle}
\numberwithin{equation}{section} \numberwithin{theorem}{section}
\numberwithin{proposition}{section} \numberwithin{lemma}{section}
\numberwithin{corollary}{section}
\numberwithin{definition}{section} \numberwithin{remark}{section}
\newcommand{\ren}{\mathbb{R}^N}
\newcommand{\re}{\mathbb{R}}
\newcommand{\n}{\nabla}
\newcommand{\p}{\partial}
\newcommand{\iy}{\infty}
\newcommand{\pa}{\partial}
\newcommand{\fp}{\noindent}
\newcommand{\ms}{\medskip\vskip-.1cm}
\newcommand{\mpb}{\medskip}
\newcommand{\AAA}{{\bf A}}
\newcommand{\BB}{{\bf B}}
\newcommand{\CC}{{\bf C}}
\newcommand{\DD}{{\bf D}}
\newcommand{\EE}{{\bf E}}
\newcommand{\FF}{{\bf F}}
\newcommand{\GG}{{\bf G}}
\newcommand{\oo}{{\mathbf \omega}}
\newcommand{\Am}{{\bf A}_{2m}}
\newcommand{\CCC}{{\mathbf  C}}
\newcommand{\II}{{\mathrm{Im}}\,}
\newcommand{\RR}{{\mathrm{Re}}\,}
\newcommand{\eee}{{\mathrm  e}}
\newcommand{\LL}{L^2_\rho(\ren)}
\newcommand{\LLL}{L^2_{\rho^*}(\ren)}
\renewcommand{\a}{\alpha}
\renewcommand{\b}{\beta}
\newcommand{\g}{\gamma}
\newcommand{\G}{\Gamma}
\renewcommand{\d}{\delta}
\newcommand{\D}{\Delta}
\newcommand{\e}{\varepsilon}
\newcommand{\var}{\varphi}
\newcommand{\lll}{\l}
\renewcommand{\l}{\lambda}
\renewcommand{\o}{\omega}
\renewcommand{\O}{\Omega}
\newcommand{\s}{\sigma}
\renewcommand{\t}{\tau}
\renewcommand{\th}{\theta}
\newcommand{\z}{\zeta}
\newcommand{\wx}{\widetilde x}
\newcommand{\wt}{\widetilde t}
\newcommand{\noi}{\noindent}
\newcommand{\uu}{{\bf u}}
\newcommand{\xx}{{\bf x}}
\newcommand{\yy}{{\bf y}}
\newcommand{\zz}{{\bf z}}
\newcommand{\aaa}{{\bf a}}
\newcommand{\cc}{{\bf c}}
\newcommand{\jj}{{\bf j}}
\newcommand{\ggg}{{\bf g}}
\newcommand{\UU}{{\bf U}}
\newcommand{\YY}{{\bf Y}}
\newcommand{\HH}{{\bf H}}
\newcommand{\GGG}{{\bf G}}
\newcommand{\VV}{{\bf V}}
\newcommand{\ww}{{\bf w}}
\newcommand{\vv}{{\bf v}}
\newcommand{\hh}{{\bf h}}
\newcommand{\di}{{\rm div}\,}
\newcommand{\ii}{{\rm i}\,}
\def\I{{\rm Id}}
\newcommand{\inA}{\quad \mbox{in} \quad \ren \times \re_+}
\newcommand{\inB}{\quad \mbox{in} \quad}
\newcommand{\inC}{\quad \mbox{in} \quad \re \times \re_+}
\newcommand{\inD}{\quad \mbox{in} \quad \re}
\newcommand{\forA}{\quad \mbox{for} \quad}
\newcommand{\whereA}{,\quad \mbox{where} \quad}
\newcommand{\asA}{\quad \mbox{as} \quad}
\newcommand{\andA}{\quad \mbox{and} \quad}
\newcommand{\withA}{,\quad \mbox{with} \quad}
\newcommand{\orA}{,\quad \mbox{or} \quad}
\newcommand{\atA}{\quad \mbox{at} \quad}
\newcommand{\onA}{\quad \mbox{on} \quad}
\newcommand{\ef}{\eqref}
\newcommand{\mc}{\mathcal}
\newcommand{\mf}{\mathfrak}

\newcommand{\ssk}{\smallskip}
\newcommand{\LongA}{\quad \Longrightarrow \quad}
\def\com#1{\fbox{\parbox{6in}{\texttt{#1}}}}
\def\N{{\mathbb N}}
\def\A{{\cal A}}
\newcommand{\de}{\,d}
\newcommand{\eps}{\varepsilon}
\newcommand{\be}{\begin{equation}}
\newcommand{\ee}{\end{equation}}
\newcommand{\spt}{{\mbox spt}}
\newcommand{\ind}{{\mbox ind}}
\newcommand{\supp}{{\mbox supp}}
\newcommand{\dip}{\displaystyle}
\newcommand{\prt}{\partial}
\renewcommand{\theequation}{\thesection.\arabic{equation}}
\renewcommand{\baselinestretch}{1.1}
\newcommand{\Dm}{(-\D)^m}

\title
{\bf Asymptotic behaviour for a class of quasilinear cooperative eigenvalue problems}

\author{Pablo~\'Alvarez-Caudevilla}

\address{Universidad Carlos III de Madrid,
Av. Universidad 30, 28911-Legan\'es, Spain}
\email{pacaudev@math.uc3m.es}

\keywords{Cooperative systems, $p$-Laplacian, Gamma-convergence}

\thanks{The work has been partially supported by the Ministry of Economy and Competitiveness of
Spain under research project MTM2012-33258.}

 \subjclass{35J10, 35J47, 35P20, 49R05}
\date{\today}





\begin{abstract}
This work is devoted to the analysis of the asymptotic behaviour of a parameter dependent quasilinear 
cooperative eigenvalue system when a parameter in front of some non-negative potentials goes to infinity. In particular we consider operators of $p$-Laplacian type. We prove that the eigenfunctions 
concentrate on the subdomains where those potentials vanish at the limit, while the eigenvalue approaches to an upper bound that will depend on those 
subdomains as well. 
We also show several properties for the unusual limiting problems obtained here.

\end{abstract}

\maketitle


\section{Introduction}
 \label{S1}
 

\subsection{Models and preliminaries}


\noindent In this paper we study the asymptotic behaviour of the following elliptic cooperative eigenvalue problem with potential terms
\begin{equation}
\label{a1}
 \left\{\begin{array}{l}
  (-\Delta_p+\lambda a |u|^{p-2})u-b |u|^{\alpha-1}u\,|v|^\beta v=\tau |u|^{p-2} u,\\
  (-\Delta_q +\lambda d |v|^{q-2} )v- c |u|^{\alpha} u\, |v|^{\beta-1} v=\tau |v|^{q-2} v,
  \end{array}
  \right.
\end{equation}
for $ (u,v) \in W^{1,p}_0(\Omega)\times W^{1,q}_0(\Omega)$ and when the parameter $\lambda\in\mathbb{R}$ goes to infinity. Moreover, $\Omega$ is a smooth bounded domain of $\mathbb{R}^N$, $N\geq 1$, with smooth boundary 
$\partial \Omega$, for example of class $C^2$ or Lipschitz. Also, we consider
$\alpha,\beta \geq 0$ and $p,q>1$ satisfying the relation
\begin{equation}
\label{pq}
\frac{\alpha+1}{p} +\frac{\beta+1}{q}=1.
\end{equation} 
Moreover, the operator $-\Delta_p$  stands for the so called $p$-Laplacian operator so that
$$-\Delta_p u= -{\rm div}\,  (|\nabla u|^{p-2} \nabla u),$$
and similarly for $-\Delta_q$ as a $q$-Laplacian operator. 
To simplify the notation we might write the eigenvalue problem \eqref{a1} in matrix form
\begin{equation}
\label{a5}
  \mathcal{S}(V_1,V_2)\binom{u}{v} :=
  \left( \begin{matrix} (-\Delta_p +V_1)u & - b |u|^{\alpha-1}u\,|v|^\beta v \\ - c |u|^{\alpha} u\, |v|^{\beta-1} v & (-\Delta_q+V_2)v
  \end{matrix}\right)=\tau \binom{|u|^{p-2} u}{ |v|^{q-2} v}.
\end{equation}
In particular, for $V_1= \lambda a |u|^{p-2}$ and $V_2=\lambda d |v|^{q-2}$ we denote $ \mathcal{S}(V_1,V_2)$ as
\begin{equation}
\label{simop}
\mathcal{S}_\lambda:=\mathcal{S}(\lambda a |u|^{p-2},\lambda d |v|^{q-2}) \quad \hbox{and}\quad \mathcal{S}=\mathcal{S}(0,0).
\end{equation}
The exponent of the $p$-Laplacian (respectively the $q$-Laplacian) and the rest of the terms in each equation  
must be consistent so that relation \eqref{pq} is satisfied. Otherwise the system does not have the proper structure of an eigenvalue problem (further details in \cite{LeRa}).

Furthermore, we denote by 
 \begin{equation}
 \label{fireq}
 \tau_1[ \mathcal{S}(V_1,V_2),\Omega],
\end{equation}
the principal eigenvalue of the operator \eqref{a5} under homogeneous Dirichlet boundary conditions and $(\varphi_1,\psi_1)^T$ the principal eigenfunction associated with the principal eigenvalue denoted
by \eqref{fireq}. We will show that such a principal eigenvalue corresponds to the one with an associated positive eigenfunction $(\varphi_1,\psi_1)^T$, 
in the sense that $\varphi_1\geq 0$, $\psi_1\geq 0$ and $\varphi_1\neq 0$, $\psi_1\neq 0$. Indeed, the positive eigenfunction will belong to the positive cone $\mathcal{K}$ with 
positive functions in the interior of the domain $\Omega$ and strictly negative outward normal derivatives on the boundary, i.e.
$$\mathcal{K}:=\{\varphi\in W^{1,p}(\Omega)\,:\, \varphi(x)>0\;\hbox{for $x\in \Omega$},\; \frac{\partial \varphi(x)}{\partial {\bf n}}<0 \;\hbox{for $x$ on $\partial\Omega$} \},$$
where ${\bf n}$ stands for the unitary outward normal vector to the boundary $\partial\Omega$.   
In particular, for the eigenvalue problem \eqref{a1} we have that
$$\tau_1(\lambda)= \tau_1[\mathcal{S}(\lambda a |u|^{p-2},\lambda d |v|^{q-2}),\Omega],$$
as the principal eigenvalue under homogeneous Dirichlet boundary conditions, associated with the principal eigenfunction $(\varphi_\lambda,\psi_\lambda)^T$.

Thus, we are interested in understanding the limit
$$\lim_{\lambda\to \infty} \tau_1(\lambda),$$
under certain assumptions established for the problem \eqref{a1}. In particular, we will assume that $a, d$ are potentials as defined in \cite{PabloAntoine}, i.e.
as Borel functions 
$$a,d: \overline{\Omega}\to \mathbb{R}^+\quad \hbox{such that}\quad a,d \in \mathcal{Q}(\Omega),$$ 
with $\mathcal{Q}(\Omega)$ representing the sets of potentials
so that the following two properties hold:
\begin{enumerate}
\item[1)] $\sup_{x \in \overline{\Omega}}a(x)< + \infty$, $\sup_{x \in \overline{\Omega}}d(x)< + \infty$;
\item[2)] $W_0^{1,p}(\Omega_0^g)=\{u \in W_0^{1,p}(\Omega); \;u=0 \;\text{ {\rm a.e. on } } \Omega_+^g \}$, with $g=a,d$,
\end{enumerate}
Moreover, those potentials $a$ and $d$ are going to be non-negative so that, $a\geq 0$, $d\geq 0$ and we denote the open sets/subdomains of $\Omega$
 where the potentials $a$ and $d$ vanish, as 
\begin{equation}
\label{vanset}
\Omega_0^a := \{x \in  \Omega\; ;\;a(x)=0\}, \quad \quad \Omega_0^d := \{x \in \Omega\; ;\; d(x)=0\}.
\end{equation}
Also, we denote
$$\Omega_0 :=\Omega^{a+d}_0=\Omega_0^a \cap \Omega_0^d = \{x \in \Omega \; ;\; a(x)=d(x)=0\},$$
so that the following subdomains come into play
$$ \Omega_+^a := \{x \in \Omega\;;\;  a(x)>0\}=\overline{\Omega} \backslash \Omega_0^a, \quad
 \Omega_+^d := \{x \in \Omega\;;\;  d(x)>0\}=\overline{\Omega} \backslash \Omega_0^d,$$
 and, there exists a subdomain of $\Omega$ 
 $$\Omega_+=\{x \in \Omega\;;\;  a(x)+d(x)>0\},$$
 which is also an open set, with $\overline\Omega_+\subset \Omega$. Then, we consider a compact set 
 $$K_0=(a+d)^{-1}(0)=\overline\Omega\setminus \Omega_+.$$
 Note that, the compact set $K_0$ consists of two compact subsets in $\mathbb{R}^N$ so that
\begin{equation}
\label{noem}
\Omega_0^a={\rm Int}(K_0^a)\not = \emptyset, \quad \Omega_0^d={\rm Int}(K_0^d)\not = \emptyset,
\end{equation}
and the subdomains $\Omega_0^a$ and $\Omega_0^d$ may not be connected, but with a finite number of components. However, we suppose that 
they are Lipschitz domains on the boundary. Maybe these regularity assumptions on the boundary might be relaxed with extra care on the
analysis but for our purposes Lipschitz boundary conditions are enough.

We also suppose that  $b, c \in C(\overline{\Omega})$ and
\eqref{a1} is \emph{strongly cooperative} in the sense that
\begin{equation*}
    b(x)>0 \quad \hbox{and}\quad c(x)>0,
    \quad \hbox{for all}\;\; x \in \overline \Omega.
\end{equation*}


\subsection{Motivation}


\noindent Most of previous results rely on the variational structure of the problem, for example \cite{KaLind}, \cite{Lind}. 

Furthermore, focusing on cooperative systems, there are previous works analysing similar quasilinear systems to \eqref{a1}
(see for example \cite{LeRa} and references therein) where the proofs were based on variational techniques and assuming 
the particular case when the cooperative terms are equal, i.e. $b=c$ in \eqref{a1} multiplied by the proper coefficients depending on the exponents of the coupling terms. 
Indeed, assuming that those off-diagonal couple terms are the same having the proper variational structure, there are several additional works. We would like to stress the work done by
Bozhkov \& Mitidieri \cite{BMit} for a variational system of the form 
\begin{equation}
\label{varmit}
\left\{
\begin{array}{ll}
 \begin{array}{l}
  -\Delta_p u= (\alpha+1)c(x) |u|^{\alpha-1}u\,|v|^\beta v+\lambda a(x) |u|^{p-2} u,\\
  -\Delta_q v= (\beta+1) c(x) |u|^{\alpha} u\, |v|^{\beta-1} v +\mu b(x) |v|^{q-2} v,
  \end{array}
  &  \hbox{in}\quad \Omega,\\ 
  \;\;\;(u,v)=(0,0), & \hbox{on}\quad \partial \Omega,
\end{array} \right.
\end{equation}
where some results of existence and multiplicity of solutions were obtained via the fibering method. This methodology was introduced
by S.I.~Pohozaev in the 1970s \cite{Poh0, PohFM}, as a convenient
generalization of previous versions by Clark \cite{Clark} and Rabinowitz
\cite{Rabin}  of variational approaches, and further
developed by Dr\'abek and Pohozaev \cite{DraPoh} and others in
the 1980's to ascertain the existence
and multiplicity of solutions for equations with a variational
form (in particular and relevant for our work, the $p$-Laplacian) associated with such equation,
i.e. potential operator equations, alternatively with other methods
such as bifurcation theory, critical point theory and so on.

However, for non-variational systems such as \eqref{a1} there are only a few results involving $pq$-Laplacian operators. In this case we would like to emphasise 
the work done by 
Cl\'ement, Fleckinger, Mitidieri and Th\'elin \cite{CFMT} where they proved the existence of radial positive solutions for a 
quasilinear elliptic system of the form 
 $$\left\{
\begin{array}{ll}
 \begin{array}{l}
  -\Delta_p u= |u|^{\alpha}\,|v|^\beta,\\
  -\Delta_q v= |u|^{\gamma} \, |v|^{\delta},
  \end{array}
  &  \hbox{in}\quad \Omega,\\ 
  \;\;\; (u,v)=(0,0), & \hbox{on}\quad \partial \Omega,
\end{array}\right.
$$
where all the exponents $\alpha,\beta,\delta,\gamma$ are non-negative and imposing the condition $\beta\gamma>0$, which maintains the system coupled.

As one of the main motivations to study these kinds of problems, system \eqref{a1} is an extension of a simiilar system assuming the Laplacian operators, 
which represents the steady states of a parabolic cooperative problem of the form
$$\left\{ \begin{array}{ll} \begin{array}{l}
    \frac{\partial u}{\partial t}-\Delta u=\lambda u + b(x) v - a(x) f(x,u)u\\
  \frac{\partial v}{\partial t}-\Delta v= c(x) u+ \lambda v  - d(x) g(x,v)v
  \end{array} & \hbox{in }\;\; \Omega\times (0,\infty),\\
 \; (u,v)^T = (0,0)^T & \hbox{on }\;\;\partial\Omega\times(0,\infty),\\
  \;(u(\cdot,0),v(\cdot,0))^T =
  (u_0,v_0)^T> (0,0)^T & \hbox{in }\;\; \Omega,\end{array}\right.
  $$
  where $\gamma \in \mathcal{C}^\mu(\overline\Omega)$, for all $\gamma \in
\{a,b,c,d\}$ and some $\mu\in (0,1]$, such that $a,d\geq 0$ and $b,c>0$ in $\Omega$,  $\lambda$ is a real parameter and the nonlinear functions $f,g \in \mathcal{C}^{\mu,1+\mu}(\bar\Omega\times
[0,\infty))$ satisfy that for any $h\in\{f,g\}$, $h(x,0)=0$ and $\partial_u h(x,u)>0$ for all $x \in\overline\Omega$ and $u>0$. This model 
arises in population dynamics 
for the analysis of cooperative species where the environments are heterogeneous (see \cite{AL-NA} for further details). 
In those parabolic models the principal eigenvalue of the linearised associated elliptic problem seems 
to be crucial in ascertaining the long time behaviour of the population, i.e. the linear eigenvalue problem
\begin{equation}
\label{lineg}
\begin{array}{ll}
 \left\{\begin{array}{l}
  -\Delta u +\lambda a u-b v=\tau u,\\
  -\Delta v +\lambda d v- c u=\tau v,
  \end{array}
  \right.\; 
  &  \hbox{for}\; (u,v)^T \in H^{1}_0(\Omega)\times H^{1}_0(\Omega),
\end{array}
\end{equation}
Indeed, there are several works analysing 
that asymptotic behaviour, when $\lambda$ goes to infinity, assuming spatial heterogeneities for the potentials in front of the parameter $\lambda$, for one single equation such as \cite{PJ}, and for
cooperative systems as well such as \cite{AC-LG}, \cite{Dan2}, or for very general spatial heterogeneities conditions \cite{PabloAntoine}.

Thus, the problem under consideration here might represent the steady state solutions of the parabolic problem
\begin{equation}
\label{parab}
 \left\{ \begin{array}{ll} \begin{array}{l}
    \frac{\partial u}{\partial t}+ (-\Delta_p+\lambda a |u|^{p-2})u-b |u|^{\alpha-1}u\,|v|^\beta v=\tau |u|^{p-2} u\\
  \frac{\partial v}{\partial t}+(-\Delta_q +\lambda d |v|^{q-2} )v- c |u|^{\alpha} u\, |v|^{\beta-1} v=\tau |v|^{q-2} v
  \end{array} & \hbox{in }\;\; \Omega\times (0,\infty),\\
  \;(u,v)^T = (0,0)^T & \hbox{on }\;\;\partial\Omega \times(0,\infty),\\
 \; (u(\cdot,0),v(\cdot,0))^T =
  (u_0,v_0)^T> (0,0)^T & \hbox{in }\;\; \Omega,\end{array}\right.
\end{equation}
where, again, the principal eigenvalue of the elliptic problem \eqref{a1} will play an important role in the dynamical behaviour of the model. 
In relation to the convergence of similar problems, however assuming fractional operators, in \cite{Lin-Zheng} 
the authors obtained some multiplicity results and the convergence of those solutions when a parameter is passed to the limit to a limiting problem with infinitely many solutions. 



\subsection{Main results}


\noindent Here, as mentioned above, we assume a non-variational cooperative system \eqref{a1} assuming that the off-diagonal terms $b$ and $c$ of the $pq$-Laplacian system \eqref{a1} are different, so 
considering a non-self-adjoint operator of the form \eqref{a5} with 
$$b(x)\neq c(x), \quad \hbox{for any}\quad x\in\overline \Omega,$$
and, then, obtaining 
the convergence of the principal eigenvalue under the very general heterogenous assumptions $1)$ and $2)$ 
for the non-negative potentials $a$ and $d$ set up above, and under a less restrictive system as the one assumed in \cite{LeRa}. In \cite{LeRa} the authors obtained such a convergence for a system 
with the same cooperative coefficients, applying variational arguments
and under several constraints. Consequently, the analysis shown here is completely different from the previous works.

Although, our methods will depend on certain convergences, other important elements 
are based on operator theory that will provide us with several important properties of the principal eigenvalue of our problem. 

Indeed, these properties will play a crucial role in obtainning the 
asymptotic behaviour of problem \eqref{a1}, as well as having their own interest since they have never been proved before. Especially due to the non-variational nature of problem 
\eqref{a1}. 
Indeed, despite the fact that our problem lacks such a variational structure we are able to prove the following crucial issue: the principal eigenvalue is the smallest, simple, positive eigenvalue and, also, isolated. To do so we use different 
approaches for linear problems that we adapt for the nonlinear system \eqref{a1} as well as 
some arguments shown in \cite{GarSabTak}. 
Note that those properties for the eigenvalue problem \eqref{a1} might be established as well for the unusual limiting problem obtained here; see details below.

Consequently, under the assumptions established above we state the main result of this paper.

\begin{theorem} \label{stabb} 
Let $\Omega \subset \mathbb{R}^N$ be an open set and assume that $a$ and  $d$ are two potentials for which conditions $1)$ and $2)$ are fulfilled. Then,
\begin{eqnarray}
\lim_{\lambda \to +\infty}\tau_1(\lambda) = \tau_1 , \label{convergence}
\end{eqnarray}
 where $\tau_1(\lambda)$ is the principal eigenvalue of the $pq$-Laplacian system \eqref{a1}, and the limiting principal eigenvalue 
 $\tau_1$ is the one corresponding to the limiting $pq$-Laplacian system 
 \begin{equation}
\label{a1pr}
 \left\{\begin{array}{l}
  -\Delta_p u-b |u|^{\alpha-1}u\,|\mathcal{P}v|^\beta \mathcal{P}v=\tau_1 |u|^{p-2} u,\\
  -\Delta_q v- c |\mathcal{P}u|^{\alpha} \mathcal{P}u\, |v|^{\beta-1} v=\tau_1 |v|^{q-2} v,
  \end{array}
  \right.
\end{equation}
for $(u,v)^T \in W^{1,p}_0(\Omega_0^a)\times W^{1,q}_0(\Omega_0^d)$ and where $\mathcal{P}$ stands for the projection on the subdomain where both potentials vanish at the same time, i.e.
$$\mathcal{P}w=\chi_{\Omega_0^{a}\cap \Omega_0^{d}}w, \quad \hbox{with}\quad w=u,v.$$
In fact, if the intersection of the subdomains where the potentials $a$ and $d$ vanish were empty 
$$\Omega_0^{a}\cap\Omega_0^{d}=\emptyset,$$
$\mathcal{P}$ is defined to be zero and
the limiting principal eigenvalue $\tau_1$ will be the infimum among the principal eigenvalues corresponding to the uncoupled system
 \begin{equation}
 \label{uncop}
\left\{
\begin{array}{ll}
  -\Delta_p u=\tau_1 |u|^{p-2} u & \hbox{in}\quad \Omega_0^{a},\\
  -\Delta_q v=\tau_1 |v|^{q-2} v & \hbox{in}\quad \Omega_0^{d},
  \end{array}
    \right.
    \end{equation}
    under homogeneous Dirichlet boundary conditions and such that
    $$\tau_1=\inf\{\tau_1[ -\Delta_p,\Omega_0^{a}],\tau_1[ -\Delta_q,\Omega_0^{d}]\}.$$
 In addition, any sequence of normalised eigenfunctions  $\{(\varphi_\lambda,\psi_\lambda)^T\}$ associated with $\tau_1(\lambda)$ admits a subsequence that converges strongly in 
 $W^{1,p}_0(\Omega)\times W^{1,q}_0(\Omega)$ to the normalised eigenfunction $(\varphi_*,\psi_*)^T$ in $L^p(\Omega)\times L^q(\Omega)$ associated with $\tau_{1}$,
in the sense that
$$\int_\Omega |\varphi_*|^p + \int_\Omega |\psi_*|^q =1.$$  
\end{theorem}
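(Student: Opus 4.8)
The plan is to establish \eqref{convergence} together with the compactness of the eigenfunctions by a $\Gamma$-convergence argument applied to the Rayleigh-type quotient characterizing $\tau_1(\lambda)$, combined with the operator-theoretic characterization of the principal eigenvalue announced in the introduction. First I would write down the variational-type characterization of $\tau_1(\lambda)$: exploiting \eqref{pq}, the principal eigenvalue of $\mathcal{S}_\lambda$ admits a minimax description over the positive cone $\mathcal{K}\times\mathcal{K}$ of a functional of the form
\begin{equation}
\label{Jlam}
J_\lambda(u,v)=\frac{\tfrac{\alpha+1}{p}\int_\Omega|\nabla u|^p+\tfrac{\beta+1}{q}\int_\Omega|\nabla v|^q+\lambda\big(\tfrac{\alpha+1}{p}\int_\Omega a|u|^p+\tfrac{\beta+1}{q}\int_\Omega d|v|^q\big)-\int_\Omega b^{*}|u|^{\alpha+1}|v|^{\beta+1}}{\tfrac{\alpha+1}{p}\int_\Omega|u|^p+\tfrac{\beta+1}{q}\int_\Omega|v|^q},
\end{equation}
where $b^{*}$ is the common value obtained after rescaling $b,c$ by the structural coefficients (this is exactly where the hypothesis that the system has the eigenvalue structure \eqref{pq} is used, as in \cite{LeRa}). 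Monotonicity of $\lambda\mapsto\tau_1(\lambda)$ is then immediate from \eqref{Jlam}, since the $\lambda$-dependent term is non-negative and non-decreasing in $\lambda$; hence the limit $\tau_\infty:=\lim_{\lambda\to\infty}\tau_1(\lambda)$ exists in $(-\infty,+\infty]$.

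Next I would prove the upper bound $\tau_\infty\le\tau_1$. Take the principal eigenfunction $(\varphi_*,\psi_*)^T$ of the limiting problem \eqref{a1pr}, extended by zero to all of $\Omega$; since $\varphi_*\in W^{1,p}_0(\Omega_0^a)$ and $\psi_*\in W^{1,q}_0(\Omega_0^d)$, condition $2)$ guarantees these extensions lie in $W^{1,p}_0(\Omega)\times W^{1,q}_0(\Omega)$ and that $\int_\Omega a|\varphi_*|^p=\int_\Omega d|\psi_*|^q=0$. Plugging this test pair into \eqref{Jlam} kills the $\lambda$-term for every $\lambda$, and because $\varphi_*,\psi_*$ are supported where both potentials vanish only through their restriction — more precisely, the coupling term evaluates to $\int_{\Omega_0^a\cap\Omega_0^d}b^{*}|\varphi_*|^{\alpha+1}|\psi_*|^{\beta+1}$, which is exactly the coupling appearing in \eqref{a1pr} via the projection $\mathcal{P}$ — one gets $J_\lambda(\varphi_*,\psi_*)=\tau_1$ for all $\lambda$, whence $\tau_1(\lambda)\le\tau_1$ and $\tau_\infty\le\tau_1$. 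The degenerate case $\Omega_0^a\cap\Omega_0^d=\emptyset$ is handled the same way, testing with the eigenfunction of whichever scalar problem in \eqref{uncop} realizes the infimum (and the other component zero), using that an eigenpair of a single $p$-Laplacian on $\Omega_0^a$ still renders the coupling term zero.

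For the reverse inequality and the strong convergence of eigenfunctions, I would take a sequence $\lambda_n\to\infty$ with normalized principal eigenfunctions $(\varphi_n,\psi_n)^T$, $\tfrac{\alpha+1}{p}\int_\Omega|\varphi_n|^p+\tfrac{\beta+1}{q}\int_\Omega|\psi_n|^q=1$. From $J_{\lambda_n}(\varphi_n,\psi_n)=\tau_1(\lambda_n)\le\tau_1$ one reads off a uniform bound on $\int_\Omega|\nabla\varphi_n|^p+\int_\Omega|\nabla\psi_n|^q$ (the coupling term is controlled by interpolation and Young's inequality using \eqref{pq}, absorbed into the gradient terms and the normalization), hence a uniform $W^{1,p}_0\times W^{1,q}_0$ bound; simultaneously $\lambda_n\big(\int_\Omega a|\varphi_n|^p+\int_\Omega d|\psi_n|^q\big)$ stays bounded, forcing $\int_\Omega a|\varphi_n|^p+\int_\Omega d|\psi_n|^q\to0$. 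Extract a subsequence with $\varphi_n\rightharpoonup\varphi_*$ in $W^{1,p}_0(\Omega)$, $\psi_n\rightharpoonup\psi_*$ in $W^{1,q}_0(\Omega)$, and strongly in $L^p,L^q$ by Rellich; the vanishing of the potential integrals together with condition $2)$ (Fatou plus the characterization of $W^{1,p}_0(\Omega_0^a)$) shows $\varphi_*\in W^{1,p}_0(\Omega_0^a)$, $\psi_*\in W^{1,q}_0(\Omega_0^d)$, and the normalization passes to the limit by strong $L^p,L^q$ convergence. Weak lower semicontinuity of the gradient terms then yields $\tau_1\le\liminf J_{\lambda_n}(\varphi_n,\psi_n)=\tau_\infty$ after recognizing the limiting quotient as an admissible competitor for \eqref{a1pr}; combined with the upper bound this gives $\tau_\infty=\tau_1$, that the quotient has no gap so $(\varphi_*,\psi_*)^T$ is a principal eigenfunction of \eqref{a1pr}, and — since the gradient norms must converge, not merely be lower semicontinuous — strong convergence in $W^{1,p}_0(\Omega)\times W^{1,q}_0(\Omega)$ by the uniform convexity / $S^{+}$ property of the $p$-Laplacian. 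The main obstacle I anticipate is the sign/positivity bookkeeping: one must keep $(\varphi_n,\psi_n)$ in the positive cone so the limit is a genuine \emph{principal} eigenfunction rather than a sign-changing critical point, and one must invoke the announced simplicity/isolation of $\tau_1$ for \eqref{a1pr} to identify the limit uniquely (up to scaling) and thus upgrade subsequential convergence — this is exactly the point where the non-variational, non-self-adjoint structure of \eqref{a1} bites, and where the adaptation of the arguments of \cite{GarSabTak} is needed.
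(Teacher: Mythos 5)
Your proposal rests on a variational characterization that this problem does not have, and that is precisely the point of the paper. You posit that $\tau_1(\lambda)$ is given by a Rayleigh-type quotient $J_\lambda$ in which $b$ and $c$ are replaced by ``the common value $b^{*}$ obtained after rescaling,'' and every key step afterwards (monotonicity in $\lambda$, the upper bound $\tau_1(\lambda)\le\tau_1$ by testing with the extended limiting eigenfunction, the lower bound by weak lower semicontinuity, and the identification of the limit because ``the quotient has no gap'') is read off from that quotient. But the standing hypothesis here is $b(x)\neq c(x)$, so neither \eqref{a1} nor the limiting system \eqref{a1pr} is an Euler--Lagrange system of any functional: no rescaling of $u,v$ or of the coefficients turns the pair of coupling terms $b|u|^{\alpha-1}u|v|^{\beta}v$ and $c|u|^{\alpha}u|v|^{\beta-1}v$ into the gradient of a single potential unless $b$ and $c$ are proportional with the structural constants $\alpha+1,\beta+1$, which is exactly the case \eqref{sysvar} treated in \cite{LeRa} and explicitly excluded here. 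Consequently $\tau_1(\lambda)$ is not an infimum of your $J_\lambda$, the equality $J_\lambda(\varphi_*,\psi_*)=\tau_1$ for the limiting eigenfunction is unjustified, and the $\Gamma$-convergence/lsc scheme has no quotient to act on. The only variational object in the paper, the symmetric problem \eqref{sysvar} with $B=\max\{b,c\}$ and its Rayleigh quotient \eqref{rayta}, is used solely as a comparison tool inside Lemma~2.3 to prove positivity, simplicity and isolation of the principal eigenvalue, not to represent $\tau_1(\lambda)$.

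The paper's actual route avoids any quotient: monotonicity and boundedness of $\tau_1(\lambda)$ come from monotonicity of the principal eigenvalue with respect to the potential and the domain (giving $\tau_1(\lambda)\le\tau_1[\mathcal{S};\Omega_0]$), so the limit of $\tau_1(\lambda)$ exists; testing the weak formulation with the eigenfunctions yields the a priori bounds \eqref{inbou}; then, instead of invoking uniform convexity abstractly, the paper proves the normalized eigenfunctions form a Cauchy sequence in $W^{1,p}_0(\Omega)\times W^{1,q}_0(\Omega)$ by a direct computation with Clarkson's inequality and the convexity inequality \eqref{convex} applied to the two weak formulations at parameters $\lambda_n<\lambda_m$; the bound $\lambda_n\int_\Omega(a|\varphi_{\lambda_n}|^p+d|\psi_{\lambda_n}|^q)\le K$ together with condition $2)$ places the strong limit in $W^{1,p}_0(\Omega_0^a)\times W^{1,q}_0(\Omega_0^d)$; passing to the limit in the weak formulation shows the limit pair solves \eqref{a1pr} (the projection $\mathcal{P}$ arising from the supports), with eigenvalue $\lim\tau_1(\lambda_n)$; and finally the identification of that limit as the principal eigenvalue of \eqref{a1pr} (and of the infimum in \eqref{uncop} when $\Omega_0^a\cap\Omega_0^d=\emptyset$) uses the nonnegativity of the limit eigenfunction together with Lemma~2.3, which is stated to hold for the limiting system as well, i.e.\ uniqueness of the eigenvalue possessing a positive eigenfunction --- not a minimization property. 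Parts of your outline (the a priori bounds, the vanishing of $\int_\Omega a|\varphi_n|^p$, Rellich compactness, the role of condition $2)$, and the need for the positivity/simplicity results adapted from \cite{GarSabTak}) do match the paper, but as written the argument for both inequalities $\tau_\infty\le\tau_1$ and $\tau_\infty\ge\tau_1$ has no valid foundation; you would need to replace the quotient-based steps by comparison/monotonicity arguments for principal eigenvalues of the non-variational operator, which is what the paper does.
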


\begin{remark}
\rm{The principal eigenvalue $\tau_1(\l)$ is simple, isolated and positive (see details below) so the whole sequence $\{(\varphi_\lambda,\psi_\lambda)^T\}$ converges strongly in 
$W^{1,p}_0(\Omega)\times W^{1,q}_0(\Omega)$  to the corresponding normalised eigenfunction $(\varphi_*,\psi_*)^T \in W^{1,p}_0(\Omega_0^a)\times W^{1,q}_0(\Omega_0^d)$
(e.g. \cite{AC-LG}). We will prove this fact as part of Theorem\;\ref{stabb}.

Moreover, we cannot forget that for this quasilinear system \eqref{a1} we have the possibility of existence of semi-trivial solutions.} 
\end{remark}

We define the spaces $W_0^{1,p}(E)$ 
for any measurable set $E \subseteq \mathbb{R}^N$ in terms of the capacity. In other words,
\begin{equation}
W_0^{1,p}(E) := \{ u\in W^{1,p}(\mathbb{R}^N) \text{ so that } u=0 \text{ q.e.  on } \mathbb{R}^N \backslash E\}, \label{defH1}
\end{equation}
when $E \subseteq \mathbb{R}^N$ is any measurable subset (not necessarily open). Observe, that particularly
for any open set $\Omega \subset \mathbb{R}^N$ there is a nice characterisation of $W_0^{1,p}(\Omega)$ using capacity 
(see for instance  \cite{bucbut} Theorem 4.1.2. or \cite{Z}), namely we have
$$u \in W_0^{1,p}(\Omega) \Leftrightarrow ( u\in W^{1,p}(\mathbb{R}^N) \text{ and } u=0 \text{ q.e.  on } \mathbb{R}^N \backslash \Omega).$$
Note that $W_0^{1,p}(E)$ is a closed subspace of $W^{1,p}(\mathbb{R}^N)$ and inherits its 
 structure. Consequently, the imbedding $W_0^{1,p}(E)$ into $L^p(E)$ remains compact. 
 Notice also that according to our definition $W_0^{1,p}(E)$ is never empty, because it always contains the function identically equal to $0$. That could be also deduced from the condition 
 \eqref{noem}. 
  It is also clear from the definition that  $W_0^{1,p}(E)=\{0\}$ when the 
capacity of $E$ is zero.

However, when the potentials are continuous functions we must recall that the definition for the space $W_0^{1,p}(\Omega)$ denoted above by \eqref{defH1} is 
equivalent to 
$$W_0^{1,p}(\Omega) := \{ u\in W^{1,p}(\mathbb{R}^N) \text{ so that } u=0 \text{ a.e.  on } \mathbb{R}^N \backslash \Omega\},$$
since the set $\Omega$ will be open and the zero set has capacity zero. This implies that part of the zero set of the potentials $a$ and $d$ with measure zero could be inside 
$\Omega_+^a$ and $\Omega_+^d$ respectively. Thus, under hypothesis \eqref{vanset} we know that 
\begin{align*}
W^{1,p}_0(K_a) & :=W^{1,p}(\mathbb{R}^N)\cap\{u=0 \text{ q.e. in } \mathbb{R}^N\setminus K_a\}\\ 
& \; =W^{1,p}(\mathbb{R}^N)\cap\{u=0 \text{ a.e. in } \mathbb{R}^N\setminus K_a\}.
\end{align*}
Indeed, the only kind of regularity assumption that is contained in condition $2)$ which can be understood as a stability-type property for $W^{1,p}_0$ 
(resp. $W^{1,q}_0$).


\section{$p$--Laplacian eigenvalue problem}


We assume that the Banach space $W^{1,p}_0(\Omega)$ with $1<p <\infty$, for a bounded domain $\Omega$ in $\mathbb{R}^N$, is equipped with the norm 
$$\|u\|_{W^{1,p}_0(\Omega)} := \left( \int_\Omega |\nabla u|^p \right)^{1/p}.$$ 
Thanks to Poincar\'e's inequality this norm is equivalent to the standard one for the Sobolev spaces $W^{1,p}_0(\Omega)$ in bounded domains.
The pair $(u,v)^T \in W^{1,p}_0(\Omega) \times W^{1,q}_0(\Omega)$ is called a weak solution of the problem \eqref{a1} if the following integral identities hold
\begin{equation}
\label{sisin}
\begin{split}
& \int_\Omega |\nabla u|^{p-2} \nabla u \nabla \nu_1 +\lambda  \int_\Omega a |u|^{p-2} u \,\nu_1 - \int_\Omega b  |u|^{\alpha}\,|v|^{\beta+1} \nu_1 = \tau(\lambda) \int_\Omega |u|^{p-2} u\, \nu_1,\\ & 
 \int_\Omega |\nabla v|^{q-2} \nabla v \nabla \nu_2+  \lambda   \int_\Omega d |v|^{q-2} v \,\nu_2 -  \int_\Omega c |u|^{\alpha+1} \, |v|^{\beta} \nu_2 =\tau(\lambda)  \int_\Omega |v|^{q-2} v \, \nu_2,
 \end{split}
 \end{equation}
for any $(\nu_1,\nu_2)^T \in  W^{1,p}_0(\Omega) \times W^{1,q}_0(\Omega)$. In addition, a real number $\tau$ is called an eigenvalue under homogeneous 
Dirichlet boundary conditions and $(u,v)^T$ is 
its associated eigenfunction of the system
\begin{equation}
\label{sys}
\left\{ \begin{array}{ll}
\begin{array}{l}
 -\Delta_p u-b |u|^{\alpha-1}u\,|v|^\beta v=\tau |u|^{p-2} u\\
 -\Delta_q v- c |u|^{\alpha} u\, |v|^{\beta-1} v=\tau |v|^{q-2} v
 \end{array} & \hbox{in}\quad \Omega, \\ 
 \;\; \; (u,v)=(0,0), & \hbox{on} \quad \partial \Omega,
  \end{array} \right.
\end{equation}
if  
{\small \begin{equation}
\label{qeaks}
\begin{split}
\int_\Omega  & |\nabla u|^{p-2}   \nabla u \nabla \nu_1  +  \int_\Omega |\nabla v|^{q-2} \nabla v \nabla \nu_2
 \\ & = \tau \left( \int_\Omega |u|^{p-2} u\, \nu_1+  \int_\Omega |v|^{q-2} v \, \nu_2\right) + \int_\Omega b  |u|^{\alpha}\,|v|^{\beta+1} \nu_1 
+ \int_\Omega c |u|^{\alpha+1} \, |v|^{\beta}  \nu_2,
\end{split}
\end{equation}}
for every $(\nu_1,\nu_2)^T \in  W^{1,p}_0(\Omega) \times W^{1,q}_0(\Omega)$.

\subsection{Operator properties}


Furthermore, we establish some properties for the operators we are working with. 
In particular, we first obtain the semicontinuity and monotonicity of the operators for the single equation \eqref{plapl}. Indeed, assuming the problem
\begin{equation}
\label{exlp}
 \left\{\begin{array}{ll}
  (-\Delta_p +\lambda a |u|^{p-2})u=f, & \hbox{in}\quad \Omega,\\
 \; u=0, &  \hbox{on}\quad \partial\Omega,
\end{array} \right.
\end{equation}
for $p>1$ and $f $ in the dual space of 
$Y=W^{1,p}_0(\Omega)$, denoted by $Y'=W^{-1,p'}(\Omega)$ with $1=\frac{1}{p}+\frac{1}{p'}$ we obtain the following result. 

\begin{lemma}
\label{lepeq}
Let $f\in W^{-1,p'}(\Omega)$ be for the problem \eqref{exlp}. Then, the operator 
\begin{equation}
\label{opr}
 \mathcal{B}(\cdot)=-\Delta_p(\cdot) +\lambda a |\cdot|^{p-2}(\cdot),
 \end{equation}
 is continuous and monotone. 
 \end{lemma}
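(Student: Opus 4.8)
The plan is to write $\mathcal{B} = A + C$, where $A u := -\Delta_p u$ is the principal part and $C u := \lambda a\,|u|^{p-2}u$ is the zeroth-order perturbation, both viewed as maps from $Y = W^{1,p}_0(\Omega)$ into $Y' = W^{-1,p'}(\Omega)$ through the weak formulation $\langle \mathcal{B}u,\nu\rangle = \int_\Omega |\nabla u|^{p-2}\nabla u\cdot\nabla\nu + \lambda\int_\Omega a\,|u|^{p-2}u\,\nu$. Since a finite sum of monotone (resp.\ continuous) operators is again monotone (resp.\ continuous), it suffices to check the two properties for $A$ and for $C$ separately.

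Monotonicity rests on the elementary inequality
$$\bigl(|\xi|^{p-2}\xi-|\eta|^{p-2}\eta\bigr)\cdot(\xi-\eta)\ge 0\qquad\hbox{for all }\xi,\eta\in\mathbb{R}^N,\ p>1,$$
which I would deduce from the convexity of the map $\xi\mapsto\frac{1}{p}|\xi|^p$, whose gradient is exactly $|\xi|^{p-2}\xi$. Taking $\xi=\nabla u(x)$, $\eta=\nabla v(x)$ and integrating over $\Omega$ gives $\langle Au-Av,u-v\rangle\ge 0$; taking its scalar version with $\xi=u(x)$, $\eta=v(x)$ and integrating against the weight $\lambda a$, which is nonnegative since $a\ge 0$ and $\lambda>0$ in the regime under consideration, gives $\langle Cu-Cv,u-v\rangle\ge 0$. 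Hence $\mathcal{B}$ is monotone (indeed strictly monotone in the gradient variable, by the strict form of the inequality for $p>1$, although that is not needed here).

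For continuity I would take $u_n\to u$ in $W^{1,p}_0(\Omega)$, so that $\nabla u_n\to\nabla u$ in $L^p(\Omega)^N$ and, by Poincar\'e's inequality, $u_n\to u$ in $L^p(\Omega)$. To pass the nonlinearities to the limit I would use the standard subsequence argument: given any subsequence, extract a further one along which $\nabla u_n\to\nabla u$ and $u_n\to u$ almost everywhere and are dominated by fixed functions $h\in L^p(\Omega)^N$ and $g\in L^p(\Omega)$; then $|\nabla u_n|^{p-2}\nabla u_n\to|\nabla u|^{p-2}\nabla u$ a.e.\ with modulus bounded by $h^{p-1}\in L^{p'}(\Omega)^N$, and likewise $\lambda a\,|u_n|^{p-2}u_n\to\lambda a\,|u|^{p-2}u$ a.e.\ with modulus bounded by $\lambda\,(\sup_{\overline\Omega}a)\,g^{p-1}\in L^{p'}(\Omega)$ (here property $1)$ on $a$ is used, together with $p'(p-1)=p$), so the dominated convergence theorem gives convergence in $L^{p'}$ along the sub-subsequence. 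Testing against $\nu\in W^{1,p}_0(\Omega)$ and using H\"older's inequality then yields $\|\mathcal{B}(u_n)-\mathcal{B}(u)\|_{W^{-1,p'}(\Omega)}\to 0$ along that sub-subsequence; since the limit does not depend on the chosen subsequence, the whole sequence converges, which is the asserted continuity (in particular demi- and hemicontinuity). The only genuinely delicate point in the whole argument is this last step: almost everywhere convergence together with an $L^{p'}$ bound on the nonlinear terms is not by itself sufficient, and one really needs the dominating $L^p$ function produced by the strong $W^{1,p}_0$ convergence, together with the subsequence trick, to invoke dominated convergence and then recover convergence of the full sequence.
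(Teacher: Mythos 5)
Your proof is correct, and although it rests on the same decomposition as the paper (principal part plus the zeroth-order potential term, each property checked for both pieces), the two key steps are carried out along genuinely different lines. For monotonicity, the paper works at the integral level: it discards the nonnegative potential contribution and applies H\"older's inequality to the gradient cross terms, reducing the claim to $\bigl(\|u\|^{p-1}-\|v\|^{p-1}\bigr)\bigl(\|u\|-\|v\|\bigr)\ge 0$, i.e.\ monotonicity of $s\mapsto |s|^{p-1}$ on norms; you instead use the pointwise inequality $\bigl(|\xi|^{p-2}\xi-|\eta|^{p-2}\eta\bigr)\cdot(\xi-\eta)\ge 0$, obtained from convexity of $\xi\mapsto \frac{1}{p}|\xi|^p$, and apply it separately to the gradient term and, weighted by $\lambda a\ge 0$, to the potential term. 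Your route is the more standard one and yields a little more (strict monotonicity in the gradient variable), while the paper's is shorter once H\"older is accepted. For continuity, the paper starts from a bounded sequence, extracts a subsequence converging weakly in $W^{1,p}_0(\Omega)$ and strongly in $L^p(\Omega)$, and after a H\"older estimate invokes the continuity of the Nemytskii operator $s\mapsto |s|^{p-2}s$ from $L^p(\Omega)$ to $L^{p'}(\Omega)$ as a known fact; you assume $u_n\to u$ strongly in $W^{1,p}_0(\Omega)$ and prove that Nemytskii-type convergence from scratch via a.e.\ convergence, an $L^p$ dominating function, dominated convergence and the sub-subsequence trick, using property $1)$ on $a$ for the zeroth-order term. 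Your hypothesis is in fact the natural one for norm continuity: the gradient term requires strong $L^p$ convergence of $\nabla u_n$, which the paper's weakly convergent subsequence does not by itself provide, so your version is both self-contained and tighter on precisely the point where the paper's write-up is loosest.
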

 \begin{proof}
 Take a bounded sequence $\{u_n\}$ in $W^{1,p}_0(\Omega)$. Then, due to compactness
we can find a subsequence, again labelled $\{u_n\}$, which converges weakly in $W^{1,p}_0(\Omega)$ and strongly in $L^p(\Omega)$ to a certain $u$. Moreover, 
{\small \begin{align*}
& \|\mathcal{B}(u_n)  -\mathcal{B}(u)\|_{\mathcal{M}(Y)}  =\sup_{\|h\|_Y \leq 1} \left\| \langle \mathcal{B}(u_n)-\mathcal{B}(u),h \rangle \right\|_{Y}\\ & 
= \sup_{\|h \|_Y \leq 1} \left\|\int_\Omega [|\nabla u_n|^{p-2} \nabla u_n - |\nabla u|^{p-2} \nabla u] \cdot \nabla h  
+\lambda  \int_\Omega a [|u_n|^{p-2} u_n - |u|^{p-2} u] h \right\|_{Y},
\end{align*}}
with $\mathcal{M}(Y)$ denoting the space of bounded operators from $Y$ to $Y$. And, applying H\"older's inequality yields
{\small \begin{equation}
\label{opno}
\begin{split}
& \| \mathcal{B}(u_n)    -\mathcal{B}(u)\|_{\mathcal{M}(Y)}  \leq \left(\int_\Omega  \left| |\nabla u_n|^{p-2} \nabla u_n - |\nabla u|^{p-2} \nabla u\right|^{p'}\right)^{1/p'} 
\left(\int_\Omega  \left|\nabla h \right|^{p}\right)^{1/p} 
\\ & +\lambda \left(\int_\Omega  \left| a [|u_n|^{p-2} u_n - |u|^{p-2} u] \right|^{p'}\right)^{1/p'} \left(\int_\Omega  \left| h \right|^{p}\right)^{1/p}
\\ & 
\leq K \left[ \left(\int_\Omega  \left| |\nabla u_n|^{p-2} \nabla u_n - |\nabla u|^{p-2} \nabla u\right|^{p'}\right)^{1/p'}  + 
\l \left(\int_\Omega  \left| a [|u_n|^{p-2} u_n - |u|^{p-2} u] \right|^{p'}\right)^{1/p'}\right].
\end{split}
\end{equation}}
Note that 
$$\frac{1}{p}+\frac{1}{p'}=1 \quad\hbox{so that,}\quad p'=\frac{p}{p-1}.$$ 
Thus, we find that \eqref{opno} certainly converges to zero because of the convergence of the sequence $\{u_n\}$ in $L^p(\Omega)$ and, also, by  the continuity of the 
Nemytski operator 
$$M(s)=|s|^{p-2}s, \quad\hbox{from $L^p(\Omega)$ to $L^{p'}(\Omega)$}.$$
Now, we see that the operator $\mathcal{B}$ is also a monotone operator. Indeed, for $u\neq v$
\begin{align*}
 \langle & \mathcal{B}(u)- \mathcal{B}(v),u-v \rangle  =  \int_\Omega [|\nabla u|^{p-2} \nabla u - |\nabla v|^{p-2} \nabla v] \cdot (\nabla u -\nabla v)\\ & 
 +\lambda  \int_\Omega a [|u|^{p-2} u - |v|^{p-2} v] (u-v)\\ &
 \geq \int_\Omega |\nabla u|^{p} -  \int_\Omega  |\nabla v|^{p-2} \nabla v \cdot \nabla u  + \int_\Omega |\nabla v|^{p} -  \int_\Omega |\nabla u|^{p-2} \nabla u\cdot  \nabla v\\ & 
 \geq  \int_\Omega |\nabla u|^{p} -  \left( \int_\Omega |\nabla v|^{p'} \right)^{1/p'} \left( \int_\Omega   |\nabla u|^{p}\right)^{1/p}
  \\ & + \int_\Omega |\nabla v|^{p} -  \left( \int_\Omega |\nabla u|^{p'} \right)^{1/p'} \left( \int_\Omega  |\nabla v|^{p}\right)^{1/p}
 \\ & = \left[ \| u\|^{p-1} - \|v\|^{p-1} \right] \left[\|u\|-\|v\|\right] \geq 0,
\end{align*}
since $s\mapsto |s|^{p-1}$ is an increasing function on $(0,+\infty)$. Therefore, the operator $\mathcal{B}$ is monotone and continuous. 
\end{proof}

\begin{remark}
Consequently, by Lemma\;\ref{lepeq} we have the existence of solutions for a problem 
of the form \eqref{exlp} from Browder's Theorem \cite[Th. 5.3.22]{DraMil}, since the operator is also semicontinuous, bounded and coercive. 
\end{remark}

\vspace{0.2cm}

The conclusions established above for the operators $ \mathcal{B}$ \eqref{opr} corresponding to the single equation \eqref{exlp} 
can be extended to a $pq$-Laplacian cooperative system of the form 
\begin{equation}
\label{coopps}
\left\{\begin{array} {ll} \begin{array}{l}  (-\Delta_p+\lambda a |u|^{p-2}) u-b |u|^{\alpha-1}u\,|v|^\beta v=f,\\ (-\Delta_q+\lambda d |v|^{q-2}) 
v- c |u|^{\alpha} u\, |v|^{\beta-1} v=g,\end{array} & \hbox{in}\quad \Omega,\\
\;\;u=v=0, & \hbox{on}\quad \partial\Omega,\end{array}\right.
\end{equation}
for $p>1$, $q>1$. 
Thus, we prove, following similar arguments and under certain extra assumptions, 
that the operator corresponding to the cooperative system is continuous. However, in general, we must stress that for large cooperative terms $b$ and $c$ 
the monotonicity is not true. Hence, only the continuity can be obtained. 
\begin{lemma}
\label{lecoops}
Assume a cooperative system of the form \eqref{coopps} such that $f$ belongs to the dual space of $Y_1=W^{1,p}_0(\Omega)$ denoted by 
$$Y_1'=W^{-1,p'}(\Omega)\quad \hbox{with}\quad 1=\frac{1}{p}+\frac{1}{p'},$$ 
$g$ belongs to the dual space of 
$Y_2=W^{1,q}_0(\Omega)$ denoted by 
$$Y_2'=W^{-1,q'}(\Omega) \quad \hbox{with}\quad 1=\frac{1}{q}+\frac{1}{q'},$$ and condition \eqref{pq} is also satisfied. Then,  the operator 
$\mathcal{S}_\lambda$ acting in \eqref{a5} is continuous. 
\end{lemma}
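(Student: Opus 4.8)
\textbf{Proof proposal for Lemma \ref{lecoops}.}

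The plan is to mimic the continuity argument in Lemma \ref{lepeq}, now applied to both components of $\mathcal{S}_\lambda$, and to handle the coupling terms separately. First I would take a bounded sequence $\{(u_n,v_n)^T\}$ in $Y_1\times Y_2 = W^{1,p}_0(\Omega)\times W^{1,q}_0(\Omega)$; by the compact embeddings $W^{1,p}_0(\Omega)\hookrightarrow L^p(\Omega)$ and $W^{1,q}_0(\Omega)\hookrightarrow L^q(\Omega)$ (together with reflexivity) I may pass to a subsequence converging weakly in $Y_1\times Y_2$ and strongly in $L^p(\Omega)\times L^q(\Omega)$ to a limit $(u,v)^T$. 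The goal is then to show $\mathcal{S}_\lambda(u_n,v_n)^T\to\mathcal{S}_\lambda(u,v)^T$ in the operator norm on $\mathcal{M}(Y_1\times Y_2)$, i.e. tested against $(\nu_1,\nu_2)^T$ with $\|\nu_1\|_{Y_1}\le 1$, $\|\nu_2\|_{Y_2}\le 1$.

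The difference $\langle\mathcal{S}_\lambda(u_n,v_n)-\mathcal{S}_\lambda(u,v),(\nu_1,\nu_2)\rangle$ splits into four groups of terms: (i) the $p$-Laplacian principal part plus the potential term $\lambda a|u|^{p-2}u$, which is controlled exactly as in the proof of Lemma \ref{lepeq} using H\"older's inequality and the continuity of the Nemytski operator $s\mapsto|s|^{p-2}s$ from $L^p(\Omega)$ to $L^{p'}(\Omega)$; (ii) the analogous $q$-Laplacian plus $\lambda d|v|^{q-2}v$ part, treated symmetrically with the Nemytski operator from $L^q(\Omega)$ to $L^{q'}(\Omega)$; (iii) the coupling term $-b\,|u_n|^{\alpha}|v_n|^{\beta}v_n$ versus $-b\,|u|^{\alpha}|v|^{\beta}v$ tested against $\nu_1$; (iv) the symmetric coupling term $-c\,|u_n|^{\alpha+1}|v_n|^{\beta}$ versus $-c\,|u|^{\alpha+1}|v|^{\beta}$ tested against $\nu_2$. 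For (iii), since $b\in C(\overline\Omega)$ is bounded, H\"older's inequality with the exponent triple $(p/\alpha,\,q/(\beta+1),\,p)$ — which is admissible precisely because $\frac{\alpha}{p}+\frac{\beta+1}{q}+\frac1p=\frac{\alpha+1}{p}+\frac{\beta+1}{q}=1$ by \eqref{pq} — bounds $|\int_\Omega b(|u_n|^\alpha|v_n|^\beta v_n - |u|^\alpha|v|^\beta v)\nu_1|$ by $\|b\|_\infty\,\||u_n|^\alpha|v_n|^\beta v_n - |u|^\alpha|v|^\beta v\|_{L^{r}}\,\|\nu_1\|_{L^p}$ for the appropriate $r$, and $\|\nu_1\|_{L^p}\le C\|\nu_1\|_{Y_1}\le C$ by Poincar\'e. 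It then remains to show the nonlinear map $(\phi,\xi)\mapsto|\phi|^\alpha|\xi|^\beta\xi$ is continuous from $L^p(\Omega)\times L^q(\Omega)$ into $L^r(\Omega)$; this follows by writing it as a composition of products and power maps and invoking continuity of Nemytski operators between the relevant Lebesgue spaces (or, equivalently, extracting a.e.-convergent subsequences and dominating). Term (iv) is handled identically with the triple $(p/(\alpha+1),\,q/\beta,\,q)$.

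Assembling the four estimates, each group tends to $0$ along the subsequence, so $\|\mathcal{S}_\lambda(u_n,v_n)-\mathcal{S}_\lambda(u,v)\|_{\mathcal{M}(Y_1\times Y_2)}\to 0$; since every subsequence has a further subsequence along which this holds with the \emph{same} limit $(u,v)^T$ (the weak/strong limit being unique), a standard subsequence argument gives convergence of the full sequence, proving continuity of $\mathcal{S}_\lambda$.

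I expect the main obstacle to be the bookkeeping for the coupling terms (iii)–(iv): one must choose the H\"older exponents so that relation \eqref{pq} makes them sum to $1$, identify the target Lebesgue space $L^r$ for the nonlinearity, and then justify continuity of the mixed Nemytski-type map $(\phi,\xi)\mapsto|\phi|^\alpha|\xi|^\beta\xi$ — which requires a little care when $\alpha$ or $\beta$ is not an integer, since one cannot simply expand. The diagonal terms are a routine transcription of Lemma \ref{lepeq}. As the statement and the surrounding text already warn, monotonicity genuinely fails for large $b,c$, so no attempt at a monotonicity estimate is made here; only continuity is claimed and proved.
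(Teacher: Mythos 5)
Your proposal is correct and follows essentially the same route as the paper's proof: take a bounded sequence, extract a subsequence converging weakly in $W^{1,p}_0(\Omega)\times W^{1,q}_0(\Omega)$ and strongly in $L^p(\Omega)\times L^q(\Omega)$, and estimate the operator-norm difference via H\"older's inequality and continuity of the Nemytskii maps, exactly as in Lemma~\ref{lepeq}. If anything, you are more explicit than the paper on the coupling terms --- the paper estimates only the diagonal ($p$- and $q$-Laplacian plus potential) contributions and dismisses the off-diagonal ones as clear from H\"older and cooperativity, whereas you spell out the admissible exponent triples, which is precisely what relation \eqref{pq} guarantees.
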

\begin{proof}
Taking a bounded sequence 
  $$\{(u_n,v_n)^T\} \in W^{1,p}_0(\Omega)\times W^{1,q}_0(\Omega),$$  
  by compactness
we can find a subsequence, again labelled $\{(u_n,v_n)^T\}$, which converges weakly in 
$W^{1,p}_0(\Omega)\times W^{1,q}_0(\Omega)$ and strongly in $L^p(\Omega)\times L^q(\Omega)$ to a certain $(u,v)^T$. Moreover, 
{\small \begin{align*}
& \left\| \mathcal{S}_\lambda \binom{u_n}{v_n}-  \mathcal{S}_\lambda \binom{u}{v}\right\|_{\mathcal{M}(W)}  =\sup_{\begin{smallmatrix} \|h_i\|_{Y_i} \leq 1\\
    i=1,2\end{smallmatrix}} \left\| \langle \mathcal{S}_\lambda \binom{u_n}{v_n}-\mathcal{S}_\lambda\binom{u}{v},\binom{h_1}{h_2} \rangle \right\|_{W}\\ & 
= \sup_{\begin{smallmatrix} \|h_i\|_{Y_i} \leq 1\\
    i=1,2\end{smallmatrix}} \|\int_\Omega [|\nabla u_n|^{p-2} \nabla u_n - |\nabla u|^{p-2} \nabla u] \cdot \nabla h_1 \\ & + 
    \int_\Omega [|\nabla v_n|^{q-2} \nabla v_n - |\nabla v|^{q-2} \nabla v] \cdot\nabla h_2\\ & 
+\lambda \left( \int_\Omega a [|u_n|^{p-2} u_n - |u|^{p-2} u] \,h_1 + \int_\Omega d [|v_n|^{q-2} v_n - |v|^{q-2} v] \,h_2\right)\\ & 
- \int_\Omega b |u|^{\alpha-1}u\,|v|^\beta v\, h_1 - \int_\Omega c |u|^{\alpha} u\, |v|^{\beta-1} v\, h_2\|_{W},
\end{align*}}
where $W:=Y_1\times Y_2= W^{1,p}_0(\Omega)\times W^{1,q}_0(\Omega)$. It is now clear that it converges to zero, once we apply the 
H\"older's inequality and the cooperative character of the system. Indeed, the following holds
{\small \begin{align*}
& \left\| \mathcal{S}_\lambda \binom{u_n}{v_n}-  \mathcal{S}_\lambda \binom{u}{v}\right\|_{\mathcal{M}(W)}  \leq \left(\int_\Omega  \left| 
|\nabla u_n|^{p-2} \nabla u_n - |\nabla u|^{p-2} \nabla u\right|^{p'}\right)^{1/p'} 
\left(\int_\Omega  \left|\nabla h_1 \right|^{p}\right)^{1/p} 
\\ & + \left(\int_\Omega  \left| |\nabla v_n|^{q-2} \nabla v_n - |\nabla v|^{q-2} \nabla v\right|^{q'}\right)^{1/q'} 
\left(\int_\Omega  \left|\nabla h_2 \right|^{q}\right)^{1/q} 
\\ & +\lambda \left(\int_\Omega  \left| a [|u_n|^{p-2} u_n - |u|^{p-2} u] \right|^{p'}\right)^{1/p'} \left(\int_\Omega  \left| h_1 \right|^{p}\right)^{1/p}\\ & 
+\lambda \left(\int_\Omega  \left| d [|v_n|^{q-2} v_n - |v|^{q-2} v] \right|^{q'}\right)^{1/q'} \left(\int_\Omega  \left| h_2 \right|^{q}\right)^{1/q}
\\ & 
\leq K (\left(\int_\Omega  \left| |\nabla u_n|^{p-2} \nabla u_n - |\nabla u|^{p-2} \nabla u\right|^{p'}\right)^{1/p'} + 
\lambda \left(\int_\Omega  \left| a [|u_n|^{p-2} u_n - |u|^{p-2} u] \right|^{p'}\right)^{1/p'}
\\ & \left(\int_\Omega  \left| |\nabla v_n|^{q-2} \nabla v_n - |\nabla v|^{q-2} \nabla v\right|^{q'}\right)^{1/q'} + 
\l \left(\int_\Omega  \left| d [|v_n|^{q-2} v_n - |v|^{q-2} v] \right|^{q'}\right)^{1/q'}).
\end{align*}}
Therefore, we obtain the continuity of the operator $\mathcal{S}_\lambda$. 
\end{proof}

\vspace{0.2cm}


\subsection{Principal eigenvalue properties}


In this section we establish a result which is a counterpart of the already existent one corresponding 
to the single equation (see e.g. \cite{An,Bar,Lind}) of the form 
\begin{equation}
\label{plapl}
	-\Delta_p u=\tau |u|^{p-2} u,
\end{equation}
under homogeneous Dirichlet boundary conditions,  
for which it is known that it admits, a unique positive first eigenvalue $\tau_1$ with a non-negative eigenfunction. 
Moreover, the principal eigenvalue of problem \eqref{plapl} is also isolated and simple as a consequence of its variational characterisation. 

For cooperative systems one can consider the variational cooperative eigenvalue problem of the form 
\begin{equation}
\label{sysvar}
\left\{ \begin{array}{ll}
\begin{array}{l}
 -\Delta_p u-(\alpha+1)B |u|^{\alpha-1}u\,|v|^\beta v=\tau_0 |u|^{p-2} u\\
 -\Delta_q v- (\beta+1)B |u|^{\alpha} u\, |v|^{\beta-1} v=\tau_0 |v|^{q-2} v
 \end{array} & \hbox{in}\quad \Omega, \\ 
  \;\;\,(u,v)=(0,0) & \hbox{on} \quad \partial \Omega,
  \end{array} \right.
\end{equation}
where  $\tau_0=\tau_0[ \mathcal{S}_0,\Omega]$ 
stands for the principal eigenvalue for the 
symmetric operator
$$
  \mathcal{S}_0\binom{u}{v} :=
  \left( \begin{matrix} -\Delta_p u & - (\alpha+1)B |u|^{\alpha-1}u\,|v|^\beta v \\ - (\beta+1)B |u|^{\alpha}u\, |v|^{\beta-1} v & -\Delta_q v
  \end{matrix}\right).
$$
Furthermore, since problem \eqref{sysvar} has a variational 
structure, in this case we have an expression for the first eigenvalue
based on the Rayleigh quotient, i.e.
{\small \begin{equation}
\label{rayta}
\tau_0=\inf_{u\in W^{1,p}_0(\Omega),v\in W^{1,q}_0(\Omega)} \frac{\int_\Omega |\nabla u|^p+ \int_\Omega |\nabla v |^q -(\alpha+\beta+2) \int_\Omega B |u|^{\alpha+1} |v|^{\beta+1}}{\int_\Omega u^p+\int_\Omega v^q}.
\end{equation}}
Hence, applying the results obtained by Kawohl \& Lindqvist \cite{KaLind} and Lindqvist \cite{Lind} we can find that the first 
eigenvalue of the problem \eqref{sysvar} is unique, positive and isolated for any arbitrary domain $\Omega$ in $\mathbb{R}^N$. 
Moreover, its associated eigenfunction is unique and positive in $\Omega$ and with maximal regularity $C^{1,\alpha}(\Omega)$, 
for some $0<\alpha <1$ by elliptic regularity. Those facts 
can be proved following the arguments nicely shown in \cite{KaLind} for the problem \eqref{sysvar}; see also for example \cite[Lemma 2.3]{LeRa}.

Thus, we state a similar result for the cooperative system of the form \eqref{sys} proving that there is no positive eigenvalue below $\tau_1$ 
(denoting the smallest eigenvalue) as well as to being an isolated eigenvalue from above, 
associated with a positive eigenfunction $(\varphi_{1},\psi_{1})^T\in W_0^{1,p}(\Omega)\times W_0^{1,q}(\Omega)$, so that $\varphi_1>0$ and $\psi_1>0$, with negative 
outward normal derivatives on the boundary.  However, we must note that our problem is not variational so a different approach must be followed. The results actually rely on 
the strong maximum principle for $p$-Laplacian problems.

To do so we introduce the following notation
$$ -\Delta_p u-b |u|^{\alpha-1}u\,|v|^\beta v = \mathcal{A}_1 w, \quad  -\Delta_q v- c |u|^{\alpha} u\, |v|^{\beta-1} v=\mathcal{A}_2 w,$$
and
where $w=(u,v)$ and $\mathcal{A}_i$, for $i=1,2$ the operator involved in each equation so that \eqref{sys} becomes
\begin{equation}
\label{gosy}
\left\{ 
\begin{array}{l}
 \mathcal{A}_1 w=\tau (\mathcal{D} w)_1,\\
\mathcal{A}_2 w=\tau (\mathcal{D} w)_2. \end{array} \right.
\end{equation}
with 
$$\tau |u|^{p-2} u =\tau (\mathcal{D} w)_1, \quad \tau |v|^{q-2} v= \tau (\mathcal{D} w)_2.$$
According to Lemma\;\ref{lecoops} 
the operator $\mathcal{S}$ is continuous 
in the positive cone $\mathcal{K}$ corresponding to the Banach space $W_0^{1,p}(\Omega)\times W_0^{1,q}(\Omega)$ and 
from condition \eqref{pq} on the exponents we find that
\begin{equation}
\begin{split}
\label{expcon}
\hbox{either} & \quad p-1<\alpha+\beta+1 < q-1,\\ 
\hbox{or} & \quad q-1 <\alpha+\beta+1<p-1,
\end{split}
\end{equation}
depending on the assumption of $p\leq q$ or $q\leq p$. 
Then assuming \eqref{expcon} we introduce a ``pseudo-homogenous" condition for the operator $\mathcal{S}$ saying that
\begin{equation}
\label{pseu}
\mathcal{S}(t {\bf w})\geq t^{r-1} \mathcal{S}({\bf w}),\quad \hbox{where ${\bf w}=(u,v)^T$ and with}\quad r=p\;\hbox{or}\;q,
\end{equation}
depending on the different possible situations, and if $t \geq 0$. In particular, it follows that
$$ \mathcal{A}_1 (t {\bf w})\geq t^{r-1}  \mathcal{A}_1 {\bf w},\quad  \mathcal{A}_2 (t {\bf w})\geq t^{r-1}  \mathcal{A}_2 {\bf w}.$$
The last property is similar to the equivalent one for linear operators 
$$\mathcal{L}(\mu {\bf w})=\mu \mathcal{L}({\bf w}),$$
which means that the operator is homogeneous of degree one or simply homogeneous. 

Furthermore, thanks to \cite[Lemma 5.5]{GarSabTak} and \eqref{expcon} we actually have that weak solutions are uniformly bounded in $L^\infty(\Omega)\times L^\infty(\Omega)$. 
Thus, due to \cite[Theorem 1, page 1203]{Lieb} the solutions of system \eqref{sys} belong to $C^{1,\alpha}(\Omega)\times  C^{1,\alpha}(\Omega)$, for some $0<\alpha<1$.

It is important to point out that the next result is also valid for the limiting problem \eqref{a1pr} under the heterogenous assumptions established at the beginning of this work.

\begin{lemma}
\label{lemmeig}
Let $\tau_1$ be the lowest eigenvalue of the problem \eqref{sys},
 under homogeneous Dirichlet boundary conditions and denoted by 
 $$\tau_1:= \tau_1[ \mathcal{S},\Omega]$$
Moreover, $\tau_1$ is algebraically simple, isolated and it possesses a unique
positive eigenfunction (up to a multiplicative constant), denoted
by $(\varphi_{1},\psi_{1})^T$. Furthermore,  $(\varphi_{1},\psi_{1})^T$ is
strictly positive and 
 there is not any other
eigenvalue $\tau$ of \eqref{sys} satisfying
$\tau<\tau_1$.
\end{lemma}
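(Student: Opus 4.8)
The plan is to prove Lemma~\ref{lemmeig} in four stages: existence of a principal eigenpair, strict positivity of the eigenfunction, simplicity, and finally isolation together with the non-existence of any eigenvalue below $\tau_1$. For \emph{existence}, I would set up a variational/minimisation scheme adapted to the non-variational system. Because the off-diagonal terms are different ($b \neq c$), the system \eqref{sys} is not the Euler--Lagrange equation of a functional directly; the standard trick is to introduce a change of unknowns (or a weighted norm) that symmetrises the coupling. Concretely, for the coupled case one rescales $v \mapsto \mu v$ with $\mu$ chosen so that $b \mu^{\beta+1}$ and $c\mu^{\alpha+1}$ (after dividing the two equations by the right homogeneity factors) coincide; then one minimises a Rayleigh-type quotient of the form \eqref{rayta} over the constraint set $\{\int_\Omega |u|^p + \int_\Omega|v|^q = 1\}$. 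Weak lower semicontinuity of $\int|\nabla u|^p + \int|\nabla v|^q$, compactness of the embeddings $W^{1,p}_0 \hookrightarrow L^p$ and $W^{1,q}_0\hookrightarrow L^q$, and continuity of the coupling term (which uses \eqref{pq} and H\"older, exactly as in the proof of Lemma~\ref{lecoops}) give a minimiser $(\varphi_1,\psi_1)$, and the Lagrange multiplier is $\tau_1$. Replacing $(u,v)$ by $(|u|,|v|)$ does not increase the quotient (the coupling term only increases in absolute value since $b,c>0$, while the gradient terms are unchanged), so the minimiser can be taken non-negative.

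For \emph{strict positivity and regularity}, I would first invoke the $L^\infty$ bound from \cite[Lemma 5.5]{GarSabTak} together with \eqref{expcon} and the $C^{1,\alpha}$ regularity from \cite[Theorem 1, p.~1203]{Lieb}, as already noted in the excerpt, so $\varphi_1,\psi_1 \in C^{1,\alpha}(\Omega)$. Then each component satisfies a scalar inequality of the form $-\Delta_p \varphi_1 \geq 0$ (using $b\,|\varphi_1|^{\alpha-1}\varphi_1|\psi_1|^\beta\psi_1 + \tau_1|\varphi_1|^{p-2}\varphi_1 \geq -C|\varphi_1|^{p-2}\varphi_1$ for $C$ large, i.e. $\varphi_1$ is a supersolution of a suitable scalar $p$-Laplacian problem), and the strong maximum principle and Hopf lemma for the $p$-Laplacian (V\'azquez) force $\varphi_1>0$ in $\Omega$ and $\partial\varphi_1/\partial \mathbf{n}<0$ on $\partial\Omega$; similarly for $\psi_1$. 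Here the strong cooperativity $b,c>0$ on $\overline\Omega$ is essential, and the case of semi-trivial components must be ruled out: if, say, $\psi_1\equiv 0$ then the first equation becomes $-\Delta_p\varphi_1 = \tau_1|\varphi_1|^{p-2}\varphi_1$, and one shows by testing with the genuine principal eigenfunction of $-\Delta_q$ on $\Omega$ (inserted into the constraint set with a small coefficient) that the quotient can be strictly lowered, so the minimiser has both components nontrivial, hence both strictly positive by the above.

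For \emph{simplicity}, I would use the hidden convexity / Picone-type argument for the $p$-Laplacian, adapted to the system via the ``pseudo-homogeneity'' inequality \eqref{pseu}. Given two positive eigenfunctions $(\varphi_1,\psi_1)$ and $(\varphi,\psi)$ for $\tau_1$, one tests the first equation for $\varphi_1$ with $(\varphi_1^p - \varphi^p)/\varphi_1^{p-1}$ and the first equation for $\varphi$ with $(\varphi^p-\varphi_1^p)/\varphi^{p-1}$ (Anane's/D\'iaz--Saa technique), add, do the same for the second pair of equations with exponent $q$, and sum everything. The gradient terms combine, by Picone's inequality, into a nonnegative quantity that vanishes only if $\varphi=t\varphi_1$ and $\psi=s\psi_1$; the coupling and eigenvalue terms, thanks to \eqref{pseu} and the homogeneity relation \eqref{pq}, assemble with a favourable sign and force $t=s$, whence $(\varphi,\psi)$ is a multiple of $(\varphi_1,\psi_1)$. \emph{Algebraic} simplicity (triviality of the generalised kernel) then follows by the standard argument: if $(\phi,\chi)$ solves the linearised equation $\mathcal{A}'(\varphi_1,\psi_1)(\phi,\chi) = \mathcal{D}(\varphi_1,\psi_1)$ one pairs with $(\varphi_1,\psi_1)$ and uses the self-improved coercivity at the principal eigenfunction to get a contradiction.

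For \emph{isolation and minimality}, suppose $\tau_n \to \tau_1$ is a sequence of eigenvalues $\neq \tau_1$ with normalised eigenfunctions $(u_n,v_n)$. Using the $L^\infty$ and $C^{1,\alpha}$ bounds (uniform because $\tau_n$ is bounded), extract a subsequence converging in $C^1$ to an eigenfunction for $\tau_1$, which by simplicity must be $\pm(\varphi_1,\psi_1)$ up to normalisation, hence of one sign; but an eigenfunction for $\tau_n\neq\tau_1$ must change sign in at least one component — here one needs a ``second eigenvalue'' type orthogonality/nodal argument, e.g. that any eigenfunction other than the principal one is sign-changing, obtained by testing against $(\varphi_1,\psi_1)$ and using strict positivity — giving a contradiction for $n$ large, so no eigenvalues accumulate at $\tau_1$ from above; and $\tau_1$ is isolated from below simply because, by the variational characterisation, $\tau_1 = \inf$ of the quotient, so there is nothing below it, and any eigenvalue $\tau<\tau_1$ would have to satisfy $\tau \geq \tau_1$ upon testing its eigenfunction against the quotient — a contradiction. \textbf{The main obstacle} I anticipate is making the simplicity argument rigorous in the \emph{coupled} case: the Picone/hidden-convexity machinery is designed for scalar $p$-homogeneous problems, and here the two equations carry different homogeneities $p$ and $q$ tied together only through \eqref{pq} and the one-sided inequality \eqref{pseu}, so assembling the test functions so that all cross terms have a definite sign — and extracting $t=s$ rather than merely $t,s$ arbitrary — is delicate; a secondary difficulty is excluding semi-trivial minimisers and handling the degenerate regularity of the $p$-Laplacian at points where $\nabla\varphi_1$ vanishes when applying Hopf's lemma.
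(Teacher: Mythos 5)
The central gap is that your existence step, and with it your minimality and simplicity arguments, rests on restoring a variational structure that the problem does not have. You propose to symmetrise the coupling by a constant rescaling $v\mapsto\mu v$; carrying this out in \eqref{sys}, the first coupling coefficient becomes $b(x)\mu^{\beta+1}$ while the second, after dividing the $q$-equation by $\mu^{q-1}$, becomes $c(x)\mu^{\beta+1-q}$, so matching them to the variational pattern $(\alpha+1)C(x)$, $(\beta+1)C(x)$ of \eqref{sysvar} forces $\mu^{q}=\frac{(\alpha+1)c(x)}{(\beta+1)b(x)}$, which is possible only when $b$ and $c$ are proportional. The paper assumes merely $b,c\in C(\overline\Omega)$, $b,c>0$ with $b\neq c$, and its stated point is precisely to treat this genuinely non-variational case, for which no Rayleigh quotient characterises $\tau_1$. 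Consequently your argument that there is nothing below $\tau_1$ ``by the variational characterisation'' collapses, and the D\'iaz--Saa/Picone simplicity argument --- which, as you yourself flag, is already delicate for the two different homogeneities $p\neq q$, and in addition needs the symmetric coupling to give the cross terms a definite sign --- does not go through as stated. The quotient \eqref{rayta} belongs to the auxiliary symmetric problem \eqref{sysvar}, not to \eqref{sys}, and can only be used for one-sided comparisons.

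The paper's proof avoids all of this by working on the positive cone with the pseudo-homogeneity inequality \eqref{pseu}. Positivity (and existence) of $\tau_1$ comes from a topological-degree, Krein--Rutman type argument: a maximal-scaling contradiction shows a perturbed equation has no positive solution for $t>0$, so the degree changes and a positive eigenpair with $\tau_1\geq\mu$ exists. Non-existence of an eigenvalue with positive eigenfunction below $\tau_1$ is proved by contradiction, using \eqref{sysvar} with $B=\max_{\overline\Omega}\{b,c\}$ and \eqref{rayta} only to derive the comparison \eqref{desop}, i.e. $\mathcal{S}(u,v)\geq\mathcal{S}(\kappa\varphi_1,\kappa\psi_1)$ for a maximal constant $\kappa$, whose maximality is then violated; simplicity follows from the analogous two-sided comparison $(u,v)\geq K_1(\varphi_1,\psi_1)$ and $(u,v)\leq K_2(\varphi_1,\psi_1)$ with extremal constants; isolation follows from uniform $C^{1,\alpha}$ bounds, compactness, and the fact that $\tau_1$ is the only eigenvalue admitting a positive eigenfunction. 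Your strict-positivity step (Lieberman regularity plus the strong maximum principle and Hopf lemma) and your compactness-based isolation step are consistent with the paper; it is the variational backbone of your existence, minimality and simplicity steps that must be replaced by these cone, degree and maximal-scaling comparison arguments.
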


\begin{proof}
\noindent{\bf Positivity of the first eigenvalue}. To prove the positivity of the first eigenvalue we claim that  
$$\hbox{if it exists}\quad (u,v)^T \in \mathcal{K} \setminus \{0\} \quad \hbox{so that } \quad \mathcal{S} \binom{u}{v} \geq \mu \binom{|u|^{p-2}u}{|v|^{q-2}v}, \quad \hbox{with}\quad \mu >0,$$
then, there is $\tau_1 \geq \mu$ and $(\varphi_1,\psi_1)^T \in \mathcal{K}\setminus \{0\}$ such that 
$$\mathcal{S} \binom{\varphi_1}{\psi_1} =\tau_1 \binom{|\varphi_1|^{p-2}\varphi_1}{|\psi_1|^{q-2}\psi_1}.$$
In other words, the topological degree on the positive cone $\mathcal{K}$ 
for the operator 
$$\left(\hbox{diag}(|u|^{p-2}u, |v|^{q-2}v) - \frac{1}{\mu} \mathcal{S} \binom{u}{v}\right),$$ 
in the unit ball changes. Hence, problem
\begin{equation}
\label{eql}
 \binom{|u|^{p-2}u}{|v|^{q-2}v}= \frac{1}{\mu} \mathcal{S} \binom{u}{v} +t \binom{|\varphi_1|^{p-2}\varphi_1}{|\psi|^{q-2}\psi_1},
\end{equation}
has no solution in $\mathcal{K}$ if $t>0$. 
To prove it, assume that if $(u,v)^T$ is a solution of \eqref{eql} in $\mathcal{K}\setminus\{0\}$ we have that 
$$ \binom{|u|^{p-2}u}{|v|^{q-2}v} \geq t \binom{|\varphi_1|^{p-2}\varphi_1}{|\psi|^{q-2}\psi_1} 
\quad \hbox{and} \quad \binom{|u|^{p-2}u}{|v|^{q-2}v} \geq \tilde{t} \binom{|\varphi_1|^{p-2}\varphi_1}{|\psi|^{q-2}\psi_1},$$
where $\tilde{t}=\tilde{s}^{r-1}$ is the maximum among all the $t$'s. Then,
$$ \binom{|u|^{p-2}u}{|v|^{q-2}v} \geq \frac{1}{\mu} \mathcal{S} \binom{\tilde{t} \varphi_1}{\tilde{t}\psi_1} 
+ t \binom{|\varphi_1|^{p-2}\varphi_1}{|\psi|^{q-2}\psi_1},$$
and, hence, since due to \eqref{pseu} and assuming \eqref{expcon}
$$\mathcal{S} \binom{\tilde{t} \varphi_1}{\tilde{t}\psi_1}\geq \tilde{s}^{r-1} \mathcal{S}\binom{\varphi_1}{\psi_1},$$ 
and, also, by definition of $(\varphi_1,\psi_1)^T$, we find that
\begin{align*}
\binom{|u|^{p-2}u}{|v|^{q-2}v} &  \geq \frac{\tilde{s}^{r-1} }{\mu}\mu\binom{|\varphi_1|^{p-2}\varphi_1}{|\psi|^{q-2}\psi_1} 
+ t \binom{|\varphi_1|^{p-2}\varphi_1}{|\psi|^{q-2}\psi_1}\\ & =(\tilde{t}+t) \binom{|\varphi_1|^{p-2}\varphi_1}{|\psi|^{q-2}\psi_1},
\end{align*}
which is a contradiction with the maximality of $\tilde{t}$. Therefore, there is no solution for the equation 
\eqref{eql} if $t>0$ or the topological degree 
on the positive cone $\mathcal{K}$ of the operator 
$$\left(\hbox{diag}(|u|^{p-2}u, |v|^{q-2}v) - \frac{1}{\mu} \mathcal{S} \binom{u}{v}\right),$$ 
 in the unit ball is zero if $t>0$. Consequently, $t=0$ and, hence,
$$\hbox{either, there exists a positive solution},$$
$$\hbox{or the topological degree on the positive cone $\mathcal{K}$ changes}.$$
In the first situation we are finished, we have that the eigenvalue is positive. 
On the other hand, for the second situation we have that the degree is 1 and we find a positive eigenvalue. Also, the degree is 1 if $\tau_1=0$. However, if that is the case 
we will find that if there exists $\lambda \in [0,\frac{1}{\mu}]$ such 
that, $$\binom{|u|^{p-2}u}{|v|^{q-2}v}=\lambda \mathcal{S} ({\bf w}),$$
where ${\bf w}=(u,v)^T$, with normalized ${\bf w}\in \mathcal{K}$, ${\bf w}=(u,v)^T$, then we actually have that $\lambda=\frac{1}{\mu}$. To prove it, let us take a sequence
$\lambda_n\to \frac{1}{\mu}$, as $n\to\infty$ and ${\bf w}_n$ positive solutions so that
$${\bf w}_n = \lambda_n  \mathcal{S} ({\bf w}_n), \quad \hbox{with}\quad {\bf w}_n \to {\bf w}_0,\quad \hbox{as}\quad n\to \infty,$$
where ${\bf w}_0$ is a positive solution. Thus,
$${\bf w}_n = \lambda_n  \mathcal{S} ({\bf w}_n) =\frac{1}{\mu}  \mathcal{S} ({\bf w}_n) +\left(\lambda_n-\frac{1}{\mu}\right)  \mathcal{S} ({\bf w}_n).$$
Therefore, passing to the limit we obtain only positive solutions if $\lambda=\frac{1}{\mu}$, proving that 
the first eigenvalue is strictly positive.

\noindent{\bf Non-existence of positive eigenvalue smaller (or bigger) than $\tau_1$}. Next, we prove that there is not any other positive 
eigenvalue smaller than $\tau_1$, assuming that
for the first eigenvalue its associated eigenfunction $(\varphi_1,\psi_1)^T$ has both components non-negative, i.e.
$$\binom{|\varphi_1|^{p-2}\varphi_1}{|\psi_1|^{q-2}\psi_1} = \frac{1}{\tau_1} \mathcal{S}\binom{\varphi_1}{\psi_1}, \quad 
\hbox{where}\quad (\varphi_1,\psi_1)^T \in  \mathcal{K}.$$
Subsequently, to prove that $\tau_1$ is the smallest positive eigenvalue we argue by contradiction. 
Thus, let us assume that there exists a positive eigenfunction 
$(u,v)^T$ so that $u>0$ and $v>0$ and satisfying the eigenvalue problem \eqref{sys}, i.e.
\begin{equation}
\label{eqpmax}
\binom{|u|^{p-2}u}{|v|^{q-2}v} = \frac{1}{\tau} \mathcal{S}\binom{u}{v}, \quad \hbox{where}\quad (u,v)^T \in  \mathcal{K},
\end{equation}
and with
$$0<\tau < \tau_1.$$
In other words, there is another positive eigenvalue smaller than $\tau_1$ with an associated positive eigenfunction.   
Then, by definition we have that
\begin{align*}
\binom{|u|^{p-2}u}{|v|^{q-2}v} & =\frac{1}{\tau} \mathcal{S}\binom{u}{v} =
 \frac{1}{\tau_1} \mathcal{S}\binom{u}{v}  +  \left(\frac{1}{\tau} -\frac{1}{\tau_1}\right)  \mathcal{S}\binom{u}{v}
 \\ &  = \frac{1}{\tau_1} \mathcal{S}\binom{u}{v}  +  \left(\frac{1}{\tau} -\frac{1}{\tau_1}\right)\tau  \binom{|u|^{p-2}u}{|v|^{q-2}v} \\ & \geq \frac{1}{\tau_1} \mathcal{S}\binom{u}{v} 
+\left(1 -\frac{\tau}{\tau_1}\right)^{r-1}  \binom{|u|^{p-2}u}{|v|^{q-2}v}.
\end{align*}
Note that,
$$0<1- \frac{\tau}{\tau_1}<1, \quad \hbox{since}\quad \tau<\tau_1.$$
Next, we show that for a certain $k$ (to be determined below) it follows that
\begin{equation}
\label{desop}
\mathcal{S}\binom{u}{v} \geq \mathcal{S}\binom{k\varphi_1}{k\psi_1}.
\end{equation}
To prove so we use the weak formulation of the eigenvalue problem \eqref{qeaks} i.e.
{\small \begin{align*}
\int_\Omega  |\nabla u|^{p-2}   & \nabla u \cdot \nabla \nu_1 - k^{p-1}  \int_\Omega  |\nabla \varphi_1|^{p-2}   \nabla \varphi_1\cdot \nabla \nu_1
 - \int_\Omega b\,  |u|^{\alpha}\,|v|^{\beta+1} \nu_1\\ & + k^{\alpha+\beta+1} \int_\Omega b \, |\varphi_1|^{\alpha}\,|\psi_1|^{\beta+1} \nu_1
  = \tau \int_\Omega |u|^{p-2} u \,\nu_1 -k^{p-1} \int_\Omega |\varphi_1|^{p-2} \varphi_1 \,\nu_1,
 \end{align*}
\begin{align*}
\int_\Omega  |\nabla v|^{q-2}   & \nabla v \cdot \nabla \nu_2 - k^{q-1}  \int_\Omega  |\nabla \psi_1|^{q-2}   \nabla \psi_1\cdot \nabla \nu_2
 - \int_\Omega c \, |u|^{\alpha+1}\,|v|^{\beta} \nu_2\\ & + k^{\alpha+\beta+1} \int_\Omega c\,  |\varphi_1|^{\alpha+1}\,|\psi_1|^{\beta} \nu_2
  = \tau \int_\Omega |v|^{q-2} v \,\nu_2 -k^{q-1} \int_\Omega |\psi_1|^{q-2} \psi_1\, \nu_2,
  \end{align*}}
  for a test function $(\nu_1,\nu_2)^T$. 
Since the eigenfunctions are positive and the system is cooperative ($b(x)>0$ and $c(x)>0$)  taking
$$B=\max_{x\in \overline\Omega}\{b(x),c(x)\},$$
we have that (we also add $\alpha+1$ in the first equation and $\b+1$ in the second, for the coupling terms)
\begin{align*}
\int_\Omega  |\nabla u|^{p-2}   & \nabla u \cdot \nabla \nu_1 - k^{p-1}  \int_\Omega  |\nabla \varphi_1|^{p-2}   \nabla \varphi_1 \cdot \nabla \nu_1
  - \int_\Omega b\,  |u|^{\alpha}\,|v|^{\beta+1} \nu_1  \\ & + k^{\alpha+\beta+1} \int_\Omega b \, |\varphi_1|^{\alpha}\,|\psi_1|^{\beta+1} \nu_1  \geq \int_\Omega  |\nabla u|^{p-2}  \nabla u \cdot \nabla \nu_1
   \\ & - (\alpha+1) \int_\Omega B  \, |u|^{\alpha}\,|v|^{\beta+1} \nu_1
  -k^{p-1}  \int_\Omega  |\nabla \varphi_1|^{p-2}   \nabla \varphi_1 \cdot \nabla \nu_1,  
 \end{align*}
 \begin{align*}
\int_\Omega  |\nabla v|^{q-2}   & \nabla v\cdot  \nabla \nu_2 - k^{q-1}  \int_\Omega  |\nabla \psi_1|^{q-2}   \nabla \psi_1\cdot  \nabla \nu_2
 - \int_\Omega c \, |u|^{\alpha+1}\,|v|^{\beta} \nu_2\\ & + k^{\alpha+\beta+1} \int_\Omega c\,  |\varphi_1|^{\alpha+1}\,|\psi_1|^{\beta} \nu_2 \geq \int_\Omega  |\nabla v|^{q-2}  \nabla v \cdot \nabla \nu_2
   \\ & - (\beta+1) \int_\Omega B   \,|u|^{\alpha+1}\,|v|^{\beta} \nu_2
  -k^{q-1}  \int_\Omega  |\nabla \psi_1|^{q-2}   \nabla \psi_1 \cdot \nabla \nu_2. 
 \end{align*}
Now, thanks to the variational cooperative eigenvalue problem \eqref{sysvar} and the Rayleigh quotient of the first eigenvalue $\tau_0$ \eqref{rayta} it follows that
\begin{align*}
\int_\Omega & |\nabla u|^{p-2}    \nabla u \cdot \nabla \nu_1 - k^{p-1}  \int_\Omega  |\nabla \varphi_1|^{p-2}   \nabla \varphi_1\cdot \nabla \nu_1
 - \int_\Omega b  |u|^{\alpha}\,|v|^{\beta+1} \nu_1  \\ & + k^{\alpha+\beta+1} \int_\Omega b\,  |\varphi_1|^{\alpha}\,|\psi_1|^{\beta+1} \nu_1  \geq \tau_0 \int_\Omega |u|^{p-2} u \,\nu_1  
  - k^{p-1} \int_\Omega  |\nabla \varphi_1|^{p-2}   \nabla \varphi_1\cdot  \nabla \nu_1,  
 \end{align*}
\begin{align*}
\int_\Omega  & |\nabla v|^{q-2}    \nabla v \cdot \nabla \nu_2 - k^{q-1}  \int_\Omega  |\nabla \psi_1|^{q-2}   \nabla \psi_1\cdot  \nabla \nu_2
 - \int_\Omega c  |u|^{\alpha+1}\,|v|^{\beta} \nu_2 \\ & + k^{\alpha+\beta+1} \int_\Omega c \, |\varphi_1|^{\alpha+1}\,|\psi_1|^{\beta} \nu_2 \geq \tau_0\int_\Omega |v|^{q-2}v\, \nu_2
  - k^{q-1}  \int_\Omega  |\nabla \psi_1|^{q-2}   \nabla \psi_1\cdot  \nabla \nu_2.
  \end{align*}
 Hence, in order to arrive at the inequality \eqref{desop} we find that
 \begin{equation}
 \label{kin}
 k\leq \min\left\{\left(\frac{\tau_0 \int_\Omega |u|^{p-2}u \,\nu_1}{\int_\Omega  |\nabla \varphi_1|^{p-2}  \nabla \varphi_1\cdot \nabla \nu_1}\right)^{\frac{1}{p-1}},  
 \left(\frac{\tau_0 \int_\Omega |v|^{q-2} v \,\nu_2}{ \int_\Omega  |\nabla \psi_1|^{q-2}   \nabla \psi_1\cdot \nabla \nu_2}\right)^{\frac{1}{q-1}}  \right\},
 \end{equation}
 with $u,\varphi_1,\nu_1\in W^{1,p}_0(\Omega)$ and $v,\psi_1,\nu_2\in W^{1,q}_0(\Omega)$. The maximal value for $k$ that satisfies \eqref{kin} is denoted by
$$\kappa=k_{\max}.$$
In particular, expression \eqref{kin} provides us with the condition
\begin{equation}
\label{inpmax}
( |u|^{p-2}u, |v|^{q-2}v)^T \geq \kappa^{r-1} ( |\varphi_1|^{p-2}\varphi_1, |\psi_1|^{q-2}\psi_1)^T,
\end{equation}
with the appropriate $r=p$ or $r=q$ depending on \eqref{expcon}. 
Consequently, thanks to \eqref{eqpmax} and \eqref{inpmax} we find that
\begin{align*}
\binom{|u|^{p-2}u}{|v|^{q-2}v}  & \geq \frac{1}{\tau_1} \mathcal{S}\binom{u}{v}
+\left(1 -\frac{\tau}{\tau_1}\right)^{r-1} \binom{|u|^{p-2}u}{|v|^{q-2}v}\\ & \geq  
\frac{1}{\tau_1} \mathcal{S}\binom{\kappa  \varphi_1}{\kappa \psi_1} +\left(1 -\frac{\tau}{\tau_1}\right)^{r-1}  \kappa^{r-1} \binom{|\varphi_1|^{p-2}\varphi_1}{|\psi_1|^{q-2}\psi_1}
\\ & \geq  \kappa^{r-1} \binom{|\varphi_1|^{p-2}\varphi_1}{|\psi_1|^{q-2}\psi_1} + \left(1 -\frac{\tau}{\tau_1}\right)^{r-1} \kappa^{r-1}  \binom{|\varphi_1|^{p-2}\varphi_1}{|\psi_1|^{q-2}\psi_1} 
\\ & =\left( \kappa^{r-1} +\left(1 -\frac{\tau}{\tau_1}\right)^{r-1} \kappa^{r-1}\right) \binom{|\varphi_1|^{p-2}\varphi_1}{|\psi_1|^{q-2}\psi_1},
\end{align*}
which contradicts the maximality of $\kappa^{r-1}$ and proving that there is no positive eigenvalue below $\tau_1$.

Similar arguments show that there is no eigenvalue bigger that $\tau_1$ which has a positive associated eigenfunction.

\noindent{\bf Simplicity}. To show that the principal eigenvalue $\tau_1$ is a simple eigenvalue we assume that, apart from the eigenfunction  $(\varphi_1,\psi_1)^T \in  \mathcal{K}$, 
there exists another eigenfunction $(u,v)^T\in \mathcal{K}$. Arguing as above we arrive at $(u,v)^T \geq K_1 (\varphi_1,\psi_1)^T$ and, also, 
the opposite inequality $(u,v)^T \leq K_2 (\varphi_1,\psi_1)^T$, so that $K_1K_1=1$, showing that both eigenfunctions are proportional. Therefore, the principal eigenvalue is simple.

\noindent{\bf Isolated principal eigenvalue $\tau_1$}. To prove it, we assume a sequence of eigenvalues $\{\tau_n\}$ of problem \eqref{sys} such that $\tau_n \to \tau_1$, as $n\to \infty$. 
Moreover, the sequence of eigenfunctions $\{(u_n,v_n)^T\}$ belongs to the positive cone $\mathcal{K}$, normalised $\|(u_n,v_n)^T\|_{L^\infty(\Omega\times L^\infty(\Omega)}=1$. 
Thanks to regularity theory and maximal principles we actually have that the sequence is bounded in $C^{1,\alpha}$, with $\alpha\in (0,1)$. Hence, we have the convergence of such a sequence in 
$C^{1,\alpha}$, up to a subsequence, i.e.
$$(u_n,v_n)^T \to  (\varphi_1,\psi_1)^T,\quad \hbox{as}\quad n\to \infty,$$
where $(\varphi_1,\psi_1)^T$ is the associated  eigenfunction with the principal eigenvalue $\tau_1$. Therefore, since the elements of the sequence will belong to the positive cone, and 
$\tau_1$ is the only eigenvalue with a positive eigenfunction we arrive at a contradiction, proving that such an eigenvalue is isolated. 

\end{proof}

\begin{remark}
\rm{Lemma\,\ref{lemmeig} is also true for the system with
{\small \begin{align*}
 (-\Delta_p+\lambda a |u|^{p-2}) u-b |u|^{\alpha-1}u\,|v|^\beta v & = \mathcal{A}_1(\lambda) {\bf w}, \; (-\Delta_q+\lambda d |v|^{q-2}) v- c |u|^{\alpha} u\, |v|^{\beta-1} v \\ & 
 =\mathcal{A}_2(\lambda) {\bf w}.
 \end{align*}}  
Also, thanks to the monotonicity of the principal eigenvalue with respect to the potential and with respect to the domain we know that the function 
$\tau_1(\lambda)$ is continuous and increasing with respect to the operator $\mathcal{S}(V_1,V_2)$ denoted by \eqref{a5} (cf. \cite{LeRa}) so that the limit
$$\lim_{\lambda \to \infty} \tau_1(\lambda),$$
exists. Indeed, under the spatial heterogenous conditions for the potentials $a$ and $d$ this limit is bounded above. Otherwise 
the limit could be possibly $\pm \infty$.
}
\end{remark}

Furthermore, for the limiting system we consider the problem 
$$
\begin{array}{ll}
 \left\{\begin{array}{l}
  -\Delta_p u-b |u|^{\alpha-1}u\,|v|^\beta v=\tau_1 |u|^{p-2} u,\\
  -\Delta_q v- c |u|^{\alpha} u\, |v|^{\beta-1} v=\tau_1 |v|^{q-2} v,
  \end{array}
  \right.
  &   \hbox{for}\; (u,v)^T \in W^{1,p}_0(\Omega_0^a)\times W^{1,q}_0(\Omega_0^d).
\end{array}
$$
We say that $(u,v)^T$ is a solution for the previous system when each equation is satisfied in the 
following sense/framework
$$(-\Delta_p u +\lambda a |u|^{p-2})u =f,\quad u\in W^{1,p}_0(A),$$
so that $A$ satisfies the spatial heterogeneous conditions under consideration in this paper and $f$ in the dual space $W^{-1,p'}(A)$

\section{Proof of Theorem\,\ref{stabb}}
 \label{S2}
 

Subsequently, to prove the main result of the paper Theorem\,\ref{stabb}. We will show the proof of that result as a consequence of 
several steps. First we prove the convergence of the eigenfunctions in $W^{1,p}_0(\Omega)\times W^{1,q}_0(\Omega)$.

\vspace{0.2cm}

\noindent{\bf Step 1:} Convergence of the eigenfunctions $\{(\varphi_\lambda,\psi_\lambda)^T\}$ in $X:=W^{1,p}_0(\Omega)\times W^{1,q}_0(\Omega)$. 

\vspace{0.2cm}

\noindent Let $\{\l_n\}_{n\geq 1}$ be any increasing unbounded sequence,
i.e., so that $0<\lambda_n<\lambda_m$ if $n<m$ and
\begin{equation*}
  \lim_{n\to \infty}\l_n=\infty.
\end{equation*}
Then, for every $n\geq 1$
we consider a sequence $\{(\varphi_{\lambda_n},\psi_{\lambda_n})^T\}$ of normalised solutions in $Y:=L^p(\Omega)\times L^q(\Omega)$ for the system \eqref{a1} in the sense that
$$\int_\Omega |\varphi_{\lambda_n}|^p + \int_\Omega |\psi_{\lambda_n}|^q =1,$$ 
associated with $\tau_1(\lambda_n)$ for the problem \eqref{a1}. 
Then, multiplying \eqref{a1} by $(\varphi_{\lambda_n},\psi_{\lambda_n})^T$
and integrating by parts yields
\begin{align*}
\int_\Omega |\nabla \varphi_{\lambda_n}|^p + 
	\int_\Omega  |\nabla \psi_{\lambda_n}|^q   + \lambda_n \int_\Omega (a |\varphi_{\lambda_n}|^p + d |\psi_{\lambda_n}|^q)
	& =\tau_1(\lambda_n) \\ & + \int_\Omega (b+c) |\varphi_{\lambda_n}|^{\alpha+1} \, |\psi_{\lambda_n} |^{\beta+1}.
\end{align*}
It is now clear that by H\"{o}lder's inequality, condition \eqref{pq} and the cooperative assumptions on the coefficients $b$ and $c$ we find that
$$ \int_\Omega (b+c) |\varphi_{\lambda_n}|^{\alpha+1} \, |\psi_{\lambda_n} |^{\beta+1} \leq C \, \left(  \int_\Omega  |\varphi_{\lambda_n}|^{p} \right)^{\frac{\alpha+1}{p}}  
\left(  \int_\Omega  |\psi_{\lambda_n}|^{q} \right)^{\frac{\beta+1}{q}},$$
for a positive constant $C>0$, so that, by construction we have 
\begin{equation}
\label{inbou}
	\int_\Omega |\nabla \varphi_{\lambda_n}|^p \leq K, \quad
	\int_\Omega  |\nabla \psi_{\lambda_n}|^q \leq K, \quad \lambda_n \int_\Omega (a \varphi_{\lambda_n}^p + d \psi_{\lambda_n}^q)\leq K,
\end{equation}
 for a positive constant $K$. We point out that the principal eigenvalue $\tau_1(\lambda)$ is bounded above, thanks to the monotonicity of the principal eigenvalue with respect to 
 the domain, by the principal eigenvalue for the operator $\mathcal{S}$ under Dirichlet homogeneous boundary conditions in the subdomain $\Omega_0$ and denoted by \eqref{simop}, i.e.
 $$\tau_1(\lambda) \leq \tau_1[\mathcal{S};\Omega_0].$$ 
 Hence, the sequence $\{(\varphi_{\lambda_n},\psi_{\lambda_n})^T\}$ is bounded in $X:=W^{1,p}_0(\Omega)\times W^{1,q}_0(\Omega)$.  
 As the imbedding
\begin{equation*}
  W_0^{1,r}(\Omega)\hookrightarrow L^r(\Omega),\quad \hbox{with}\quad r=p,q,
\end{equation*}
is compact, 
 we can extract a subsequence, again
labelled by $\{(\varphi_{\lambda_n},\psi_{\lambda_n})^T\}$, weakly convergent
in $X$ and strongly in $Y$
to some function $(\varphi_*,\psi_*)\in Y$, i.e.
$$\lim_{\lambda \to \infty} \|(\varphi_{\lambda_n} ,\psi_{\lambda_n})^T- (\varphi_*,\psi_*)^T\|_{Y}=0,\quad Y=L^p(\Omega)\times L^q(\Omega).$$
In fact, we will prove in the sequel that the sequence $\{(\varphi_{\lambda_n},\psi_{\lambda_n})^T\}$ is actually a Cauchy sequence in $X$. In other words, 
we have the strong convergence of that subsequence in $U$, this implies that
$$\lim_{\lambda \to \infty} \|(\varphi_{\lambda_n} ,\psi_{\lambda_n})^T- (\varphi_*,\psi_*)^T\|_{X}=0,\quad X=W^{1,p}_0(\O)\times W^{1,q}_0(\O).$$
with $(\varphi_*,\psi_*)^T\in X$ and such that 
$$\int_\Omega |\varphi_*|^p+ \int_\Omega |\psi_*|^q=1.$$
Subsequently, fix $n<m$ so that $0<\lambda_n<\lambda_m$ and set
\begin{equation}
\label{38}
   D_{n,m}  :=  \int_{\Omega} \left|\nabla (\varphi_{\lambda_n} - \varphi_{\lambda_m})\right|^p
    +  \int_{\Omega} \left|\nabla (\psi_{\lambda_n} -
    \psi_{\lambda_m})\right|^q.
\end{equation}
In order to get the convergence we consider the so-called Clarkson's inequality i.e.
\begin{equation}
\label{clark}
\frac{|u_1 - u_2|^p}{2^{p-1}} + \frac{|u_1 + u_2|^p}{2^{p-1}}\leq |u_1|^p + |u_2|^p.
\end{equation}
Also, thanks to the strict convexity of the mapping $u \mapsto |u|^p$ it follows that
\begin{equation}
\label{convex}
|u_2|^p > |u_1|^p+p |u_1|^{p-2} u_1(u_2-u_1),\quad \hbox{for points in $\mathbb{R}^N$ and $u_1\neq u_2$, $p>1$.}
\end{equation}
Hence, according to \eqref{convex} for $u_1+u_2$
\begin{equation}
\label{con22}
\frac{|u_1 + u_2|^p}{2^{p}} \geq |u_1|^p+\frac{1}{2} p |u_1|^{p-2} u_1(u_2-u_1).
\end{equation}
Consequently, combining both inequalities \eqref{clark}, \eqref{con22} we find that
$$\frac{|u_1 - u_2|^p}{2^{p-1}} \leq |u_2|^p - |u_1|^p-p |u_1|^{p-2} u_1(u_2-u_1),$$
and, hence, if $\nabla (\varphi_{\lambda_n} - \varphi_{\lambda_m})= u_1-u_2$ it yields 
$$\frac{|\nabla \varphi_{\lambda_n} - \nabla \varphi_{\lambda_m} |^p}{2^{p-1}} \leq |\nabla \varphi_{\lambda_m} |^p - |\nabla \varphi_{\lambda_n} |^p-p 
|\nabla \varphi_{\lambda_n} |^{p-2} \nabla \varphi_{\lambda_n} \cdot(\nabla \varphi_{\lambda_m} -\nabla \varphi_{\lambda_n}).$$
Indeed, integrating in $\Omega$ 
\begin{align*}
\frac{1}{2^{p-1}}  \int_{\Omega} |\nabla \varphi_{\lambda_n} - \nabla \varphi_{\lambda_m} |^p  & \leq  \int_{\Omega}|\nabla \varphi_{\lambda_m} |^p 
\\ & -  \int_{\Omega} |\nabla \varphi_{\lambda_n} |^p- \int_{\Omega} p 
|\nabla \varphi_{\lambda_n} |^{p-2} \nabla \varphi_{\lambda_n} \cdot(\nabla \varphi_{\lambda_m} -\nabla \varphi_{\lambda_n}).
\end{align*}
Then, 
\begin{align*}
D_{n,m}  & \leq \int_{\Omega}|\nabla \varphi_{\lambda_m} |^p -  \int_{\Omega} |\nabla \varphi_{\lambda_n} |^p- \int_{\Omega} p 
|\nabla \varphi_{\lambda_n} |^{p-2} \nabla \varphi_{\lambda_n} \cdot(\nabla \varphi_{\lambda_m} -\nabla \varphi_{\lambda_n})
\\ & +\int_{\Omega}|\nabla \psi_{\lambda_m} |^q -  \int_{\Omega} |\nabla \psi_{\lambda_n} |^q- \int_{\Omega} q 
|\nabla \psi_{\lambda_n} |^{q-2} \nabla \psi_{\lambda_n} \cdot(\nabla \psi_{\lambda_m} -\nabla \psi_{\lambda_n}).
\end{align*}
Now, applying the weak formulation \eqref{sisin} for the eigenvalue problem \eqref{a1} with the principal eigenfunctions $\{(\varphi_{\lambda_n},\psi_{\lambda_n})^T\}$ 
(respectively $\{(\varphi_{\lambda_m},\psi_{\lambda_m})^T\}$) 
and rearranging terms we find that
{\small \begin{align*}
D_{n,m}  & \leq  \tau_1(\lambda_m) \int_\Omega |\varphi_{\lambda_m}|^{p}- \lambda_m \int_\Omega a |\varphi_{\lambda_m}|^{p}
+ \int_\Omega b |\varphi_{\lambda_m}|^{\alpha+1} |\psi_{\lambda_m}|^{\beta+1} \\ & 
+ \tau_1(\lambda_m) \int_\Omega |\psi_{\lambda_m}|^{q}- \lambda_m \int_\Omega d |\psi_{\lambda_m}|^{q}+ \int_\Omega c |\varphi_{\lambda_m}|^{\alpha+1} |\psi_{\lambda_m}|^{\beta+1} \\ & 
+ (p-1)\left[ \tau_1(\lambda_n) \int_\Omega |\varphi_{\lambda_n}|^{p}- \lambda_n \int_\Omega a |\varphi_{\lambda_n}|^{p}
+ \int_\Omega b |\varphi_{\lambda_n}|^{\alpha+1} |\psi_{\lambda_n}|^{\beta+1}\right] \\ & 
+ (q-1)\left[\tau_1(\lambda_n) \int_\Omega |\psi_{\lambda_n}|^{q}- \lambda_n \int_\Omega d |\psi_{\lambda_n}|^{q}+ \int_\Omega c |\varphi_{\lambda_n}|^{\alpha+1} |\psi_{\lambda_n}|^{\beta+1}\right] \\ & 
-p \left[ \tau_1(\lambda_n) \int_\Omega |\varphi_{\lambda_n}|^{p-1} \varphi_{\lambda_m}- \lambda_n \int_\Omega a |\varphi_{\lambda_n}|^{p-1} \varphi_{\lambda_m}
+ \int_\Omega b |\varphi_{\lambda_n}|^{\alpha} |\psi_{\lambda_n}|^{\beta+1} \varphi_{\lambda_m}\right]\\ &  
-q \left[ \tau_1(\lambda_n) \int_\Omega |\psi_{\lambda_n}|^{q-1} \psi_{\lambda_m}- \lambda_n \int_\Omega a |\psi_{\lambda_n}|^{q-1} \psi_{\lambda_m}
+ \int_\Omega c |\varphi_{\lambda_n}|^{\alpha+1} |\psi_{\lambda_n}|^{\beta} \psi_{\lambda_m}\right].
\end{align*}}
Thus, 
{\small \begin{align*}
D_{n,m}  & \leq  \tau_1(\lambda_m) \left(\int_\Omega |\varphi_{\lambda_m}|^{p} + \int_\Omega |\psi_{\lambda_m}|^{q} \right)
- \tau_1(\lambda_n) \left(\int_\Omega |\varphi_{\lambda_n}|^{p} + \int_\Omega |\psi_{\lambda_n}|^{q} \right)\\ & 
+p \tau_1(\lambda_n) \int_\Omega |\varphi_{\lambda_n}|^{p-1} (\varphi_{\lambda_n}-\varphi_{\lambda_m}) +q \tau_1(\lambda_n) \int_\Omega |\psi_{\lambda_n}|^{q-1} (\psi_{\lambda_n}-\psi_{\lambda_m})\\ & 
+ p \lambda_n  \int_\Omega a |\varphi_{\lambda_n}|^{p-1} (\varphi_{\lambda_m}-\varphi_{\lambda_n})
+ q \lambda_n  \int_\Omega d |\psi_{\lambda_n}|^{q-1} (\psi_{\lambda_m}-\psi_{\lambda_n})\\ & 
+ \lambda_n  \int_\Omega a |\varphi_{\lambda_n}|^{p} - \lambda_m \int_\Omega a |\varphi_{\lambda_m}|^{p} 
+ \lambda_n \int_\Omega d |\psi_{\lambda_n}|^{q} -  \lambda_m \int_\Omega d |\psi_{\lambda_m}|^{q}\\ & 
+ p  \int_\Omega b |\varphi_{\lambda_n}|^{\alpha} |\psi_{\lambda_n}|^{\beta+1} (\varphi_{\lambda_n}-\varphi_{\lambda_m}) 
+ q \int_\Omega c |\varphi_{\lambda_n}|^{\alpha+1} |\psi_{\lambda_n}|^{\beta} (\psi_{\lambda_n}-\psi_{\lambda_m}).
\end{align*}}
For the terms involving the non-negative potentials $a$ and $d$ and thanks to the convexity property \eqref{convex}, it follows that,
{\small $$ \lambda_n  \int_\Omega a |\varphi_{\lambda_n}|^{p} 
+ p \lambda_n  \int_\Omega a |\varphi_{\lambda_n}|^{p-1} (\varphi_{\lambda_m}-\varphi_{\lambda_n})- \lambda_m \int_\Omega a |\varphi_{\lambda_m}|^{p} < 
(\lambda_n-\lambda_m) \int_\Omega a |\varphi_{\lambda_m}|^{p} \leq 0,$$}
and
{\small $$ \lambda_n \int_\Omega d |\psi_{\lambda_n}|^{q} + q \lambda_n  \int_\Omega d |\psi_{\lambda_n}|^{q-1} (\psi_{\lambda_m}-\psi_{\lambda_n}) -  \lambda_m \int_\Omega d |\psi_{\lambda_m}|^{q} < 
(\lambda_n-\lambda_m) \int_\Omega d |\psi_{\lambda_m}|^{q} \leq 0,$$}
since, by construction $n<m$ and, then, $\lambda_n<\lambda_m$. 
Hence, after adding and subtracting some appropriate terms it yields
\begin{align*}
D_{n,m}  & \leq  (\tau_1(\lambda_m)-\tau_1(\lambda_n)) \left(\int_\Omega |\varphi_{\lambda_m}|^{p} + \int_\Omega |\psi_{\lambda_m}|^{q} \right)
\\ & + \tau_1(\lambda_n) \left(\int_\Omega (|\varphi_{\lambda_m}|^{p}- |\varphi_{\lambda_n}|^{p}  
+ \int_\Omega (|\psi_{\lambda_m}|^{q}- |\psi_{\lambda_n}|^{q}) \right)\\ & 
+p \tau_1(\lambda_n) \int_\Omega |\varphi_{\lambda_n}|^{p-1} (\varphi_{\lambda_n}-\varphi_{\lambda_m}) 
+q \tau_1(\lambda_n) \int_\Omega |\psi_{\lambda_n}|^{q-1} (\psi_{\lambda_n}-\psi_{\lambda_m})\\ & 
+ p  \int_\Omega b |\varphi_{\lambda_n}|^{\alpha} |\psi_{\lambda_n}|^{\beta+1} (\varphi_{\lambda_n}-\varphi_{\lambda_m}) 
+ q \int_\Omega c |\varphi_{\lambda_n}|^{\alpha+1} |\psi_{\lambda_n}|^{\beta} (\psi_{\lambda_n}-\psi_{\lambda_m}).
\end{align*}
For the second term we use again the convexity property \eqref{convex}, thus
$$\tau_1(\lambda_n) \left(\int_\Omega (|\varphi_{\lambda_m}|^{p}- |\varphi_{\lambda_n}|^{p}) \right)  
\leq\tau_1(\lambda_n)  p \int_\Omega |\varphi_{\lambda_m}|^{p-2} \varphi_{\lambda_m}(\varphi_{\lambda_m}-\varphi_{\lambda_n}),$$
$$\tau_1(\lambda_n) \left(\int_\Omega (|\psi_{\lambda_m}|^{q}- |\psi_{\lambda_n}|^{q}) \right) \leq \tau_1(\lambda_n)  
q \int_\Omega |\psi_{\lambda_m}|^{q-2} \psi_{\lambda_m}(\psi_{\lambda_m}-\psi_{\lambda_n}).$$
Consequently, applying H\"{o}lder's inequality and the fact that the eigenfunctions are bounded in $Y=L^p(\Omega)\times L^q(\Omega)$ to the different terms
we have that for a constant $C>0$
$$
 (\tau_1(\lambda_m)-\tau_1(\lambda_n)) \left(\int_\Omega |\varphi_{\lambda_m}|^{p} + \int_\Omega |\psi_{\lambda_m}|^{q} \right)\leq C (\tau_1(\lambda_m)-\tau_1(\lambda_n)),
 $$
\begin{align*}  
\tau_1(\lambda_n) \int_\Omega (|\varphi_{\lambda_m}|^{p}- |\varphi_{\lambda_n}|^{p}) & \leq \tau_1(\lambda_n)  
  p \int_\Omega |\varphi_{\lambda_m}|^{p-2} \varphi_{\lambda_m}(\varphi_{\lambda_m}-\varphi_{\lambda_n}) \\ & \leq 
  C \|\varphi_{\lambda_m} -\varphi_{\lambda_n}\|_{L^p(\Omega)},
  \end{align*}
\begin{align*}   
\tau_1(\lambda_n)  \int_\Omega (|\psi_{\lambda_m}|^{q}- |\psi_{\lambda_n}|^{q}) & \leq  \tau_1(\lambda_n)  q \int_\Omega |\psi_{\lambda_m}|^{q-2} \psi_{\lambda_m}(\psi_{\lambda_m}-\psi_{\lambda_n})
\\ & \leq 
  C \|\psi_{\lambda_m}- \psi_{\lambda_n}\|_{L^q(\Omega)},
  \end{align*} 
\begin{align*}  
p \tau_1(\lambda_n) \int_\Omega |\varphi_{\lambda_n}|^{p-1} (\varphi_{\lambda_n}-\varphi_{\lambda_m}) &  \leq C 
  \left( \int_\Omega |\varphi_{\lambda_n}|^{p'}\right)^{1/p'} \left(\int_\Omega |\varphi_{\lambda_n}-\varphi_{\lambda_m}|^p\right)^{1/p}
  \\ & \leq C \|\varphi_{\lambda_n}-\varphi_{\lambda_m}\|_{L^p(\Omega)},
  \end{align*}
\begin{align*}
q \tau_1(\lambda_n) \int_\Omega |\psi_{\lambda_n}|^{q-1} (\psi_{\lambda_n}-\psi_{\lambda_m}) & \leq C 
  \left( \int_\Omega |\psi_{\lambda_n}|^{q'}\right)^{1/q'} \left(\int_\Omega |\psi_{\lambda_n}-\psi_{\lambda_m}|^q\right)^{1/q} \\ & \leq C  \|\psi_{\lambda_n}-\psi_{\lambda_m}\|_{L^q(\Omega)}.
\end{align*}
Also, for the terms with the cooperative coefficients $b$ and $c$ 
$$
  p  \int_\Omega b |\varphi_{\lambda_n}|^{\alpha} |\psi_{\lambda_n}|^{\beta+1} (\varphi_{\lambda_n}-\varphi_{\lambda_m})\quad 
  \hbox{and}\quad q \int_\Omega c |\varphi_{\lambda_n}|^{\alpha+1} |\psi_{\lambda_n}|^{\beta} (\psi_{\lambda_n}-\psi_{\lambda_m}).
$$
we will apply Young's and H\"{o}lder's inequalities. Indeed, due to Young's inequality and assuming the cooperative term $b(x)$ is bounded in $\Omega$ we find that
{\small $$
 p  \int_\Omega b |\varphi_{\lambda_n}|^{\alpha} |\psi_{\lambda_n}|^{\beta+1} (\varphi_{\lambda_n}-\varphi_{\lambda_m})
 \leq C \int_\Omega \left(\frac{\alpha+1}{p} |\varphi_{\lambda_n}|^{\frac{\alpha p}{\alpha+1}} + \frac{\beta+1}{q} |\psi_{\lambda_n}|^q\right)(\varphi_{\lambda_n}-\varphi_{\lambda_m}).
$$}
Moreover, thanks to H\"{o}lder's inequality 
{\tiny \begin{align*}
\int_\Omega \left(\frac{\alpha+1}{p} |\varphi_{\lambda_n}|^{\frac{\alpha p}{\alpha+1}} + \frac{\beta+1}{q} |\psi_{\lambda_n}|^q\right)
(\varphi_{\lambda_n}-\varphi_{\lambda_m}) &  \leq C \left(\int_\Omega |\varphi_{\lambda_n}|^{\frac{\alpha (p-1)}{\alpha+1}}\right)^{1/p'}  
\left(\int_\Omega |\varphi_{\lambda_n}-\varphi_{\lambda_m}|^p\right)^{1/p} \\ & 
+  C \left(  \int_\Omega  |\psi_{\lambda_n}|^{q p'} \right)^{1/p'} 
\left(\int_\Omega |\varphi_{\lambda_n}-\varphi_{\lambda_m}|^p\right)^{1/p}.
\end{align*}}
Thus, according \cite[Lemma 2]{CuQu} it follows that
$$\|\psi_{\lambda_n}\|_{qp'}^q \leq \epsilon \int_\Omega |\nabla \psi_{\lambda_n}|^q + M_\epsilon \int_\Omega \omega |\psi_{\lambda_n}|^q,$$
for $\epsilon>0$, a positive constant $M_\epsilon$ depending on $\epsilon$ and a bounded positive weight $\omega$. Then, we finally have that 
\begin{align*}
p  \int_\Omega b |\varphi_{\lambda_n}|^{\alpha} |\psi_{\lambda_n}|^{\beta+1} & (\varphi_{\lambda_n}-\varphi_{\lambda_m}) 
 \leq C \left(\int_\Omega |\varphi_{\lambda_n}|^{\frac{\alpha (p-1)}{\alpha+1}}\right)^{1/p'} \|\varphi_{\lambda_n}-\varphi_{\lambda_m}\|_{L^p(\Omega)}
\\ & + C \left( \epsilon \int_\Omega |\nabla \psi_{\lambda_n}|^q + M_\epsilon \int_\Omega \omega |\psi_{\lambda_n}|^q\right) \|\varphi_{\lambda_n}-\varphi_{\lambda_m}\|_{L^p(\Omega)}.
\end{align*}
 Similarly, for the term with the cooperative coefficient $c$ we find that
\begin{align*}
 q \int_\Omega c |\varphi_{\lambda_n}|^{\alpha+1} |\psi_{\lambda_n}|^{\beta} & (\psi_{\lambda_n}-\psi_{\lambda_m})  \leq C 
 \left(\int_\Omega |\psi_{\lambda_n}|^{\frac{\beta (q-1)}{\beta+1}}\right)^{1/q'}  
\left(\int_\Omega |\psi_{\lambda_n}-\psi_{\lambda_m}|^q\right)^{1/q} \\ & 
+  C \left(   \epsilon \int_\Omega |\nabla \varphi_{\lambda_n}|^p + M_\epsilon \int_\Omega \omega |\varphi_{\lambda_n}|^p \right)
\left(\int_\Omega |\psi_{\lambda_n}-\psi_{\lambda_m}|^q\right)^{1/q},
\end{align*}
supposing that 
$$\|\varphi_{\lambda_n}\|_{pq'}^p \leq \epsilon \int_\Omega |\nabla \varphi_{\lambda_n}|^p + M_\epsilon \int_\Omega \omega |\varphi_{\lambda_n}|^p,$$
Hence, since the eigenfunctions are bounded in $Y=L^p(\Omega)\times L^q(\Omega)$, 
$$\frac{\alpha (p-1)}{\alpha+1}<p \quad \hbox{and}\quad \frac{\beta(q-1)}{\beta+1}<q,$$
and thanks to \eqref{inbou} we finally obtain that 
\begin{align*}
 p  \int_\Omega b |\varphi_{\lambda_n}|^{\alpha} |\psi_{\lambda_n}|^{\beta+1} (\varphi_{\lambda_n}-\varphi_{\lambda_m}) & \leq C \|\varphi_{\lambda_n}-\varphi_{\lambda_m}\|_{L^p(\Omega)},\\ 
 q \int_\Omega c |\varphi_{\lambda_n}|^{\alpha+1} |\psi_{\lambda_n}|^{\beta} (\psi_{\lambda_n}-\psi_{\lambda_m})  & \leq C \|\psi_{\lambda_n}-\psi_{\lambda_m}\|_{L^q(\Omega)}.
 \end{align*}
Therefore, according to the previous inequalities we find that there exists a positive constant $C>0$ so that 
\begin{align*}
\int_{\Omega}  & \left|\nabla (\varphi_{\lambda_n} - \varphi_{\lambda_m})\right|^p
    + \int_{\Omega} \left|\nabla   (\psi_{\lambda_n} -
    \psi_{\lambda_m})\right|^q   \leq C (\tau_1(\lambda_m)-\tau_1(\lambda_n)) \\ & + C  \|\varphi_{\lambda_n}-\varphi_{\lambda_m}\|_{L^p(\Omega)} + C  \|\psi_{\lambda_n}-\psi_{\lambda_m}\|_{L^q(\Omega)}.
    \end{align*}
Thus, thanks to the convergence of the sequence $\{\varphi_{\lambda_n}\}$ in $L^p$, the sequence $\{\psi_{\lambda_n}\}$ in $L^q$ and the fact that the function $\tau_1(\lambda)$ is convergent,
since it is an increasing function in $\lambda$ and bounded above, by the monotonicity of the principal eigenvalue with respect to the potential and the domain, 
we actually have that the sequence $\{(\varphi_{\lambda_n},\psi_{\lambda_n})^T\}$ is a Cauchy sequence in $X$ so that the limit $(\varphi_*,\psi_*)^T$ satisfies that
$$(\varphi_*,\psi_*) \geq (0,0),\quad \hbox{and}\quad \int_\Omega |\varphi_*|^p+ \int_\Omega |\psi_*|^q=1.$$

\vspace{0.2cm}

\noindent{\bf Step 2:} $\{(\varphi_*,\psi_*)^T\}$ belongs to the functional space $W^{1,p}_0(\Omega_0^{a})\times W^{1,q}_0(\Omega_0^{d})$. 

\vspace{0.2cm}

\noindent First thanks to \eqref{inbou} we have that the sequence $\{(\varphi_{\lambda_n},\psi_{\lambda_n})^T\}$ is bounded in $X$. Then, 
as performed above we can extract a subsequence, again
labelled by $\{(\varphi_{\lambda_n},\psi_{\lambda_n})^T\}$, weakly convergent
in $X$ and strongly in $L^p(\Omega) \times L^q(\Omega)$
to some function $(\varphi_*,\psi_*)^T\in L^p(\Omega) \times L^q(\Omega)$. Actually, we have proved that the sequence converges strongly in 
$W_0^{1,p}(\Omega) \times W_0^{1,q}(\Omega)$.

Furthermore, according to \eqref{inbou} it follows 
$$\lambda_n \int_\Omega a \varphi_{\lambda_n}^p\leq K\quad \hbox{and}\quad  \lambda_n \int_\Omega d \psi_{\lambda_n}^q\leq K.$$
Then, since $\{(\varphi_{\lambda_n},\psi_{\lambda_n})^T\}$ converges strongly in $L^p(\Omega) \times L^q(\Omega)$ 
to $(\varphi_*,\psi_*)^T\in L^p(\Omega) \times L^q(\Omega)$. Indeed, 
$$\lim_{n\to \infty} \int_\Omega a \varphi_{\lambda_n}^p = 0\quad \hbox{and}\quad \lim_{n\to \infty} \int_\Omega d \psi_{\lambda_n}^q=0,$$
from which we easily deduce that 
$$\varphi_* = 0 \quad \hbox{a.e. in $\Omega_+^{a}$ and},\quad \psi_* = 0 \quad\hbox{a.e. in $\Omega_+^{d}$}.$$
Consequently, we can conclude that 
$$(\varphi_*,\psi_*)^T\in W_0^{1,p}(\Omega_0^{a}) \times W_0^{1,q}(\Omega_0^{d}).$$

\vspace{0.2cm}

\noindent{\bf Step 3:} $(\varphi_*,\psi_*)^T$ eigenfunction of the limiting problem in $W^{1,p}_0(\Omega_0^{a})\times W^{1,q}_0(\Omega_0^{d})$. 

\vspace{0.2cm}

\noindent As a consequence of Step 1 it implies that 
$$\hspace{1cm} \tau_1(\lambda_n) |\varphi_{\lambda_n}|^{p-2} \varphi_{\lambda_n}  + b |\varphi_{\lambda_n} |^{\alpha-1}\varphi_{\lambda_n} \,|\psi_{\lambda_n} |^\beta \psi_{\lambda_n} \quad 
\hbox{and}$$ 
$$\tau_1(\lambda_n) |\psi_{\lambda_n}|^{q-2} \psi_{\lambda_n} +c |\varphi_{\lambda_n}|^{\alpha} \varphi_{\lambda_n} \, |\psi_{\lambda_n} |^{\beta-1} \psi_{\lambda_n},$$
converges strongly in $L^p(\Omega) \times L^q(\Omega)$ to
$$\tau_1 |\varphi_*|^{p-2}\varphi_*  + b |\varphi_*|^{\alpha-1}\varphi_*\,|\psi_*|^\beta \psi_* \quad \hbox{and} \quad \tau_1 |\psi_*|^{q-2} \psi_* +c |\varphi_*|^{\alpha} \varphi_*\, |\psi|^{\beta-1} \psi_*,$$
 respectively.
 Therefore, from the assumptions on the potentials $a$ and $d$ and Step 2 we find that 
 up to a subsequence, $\{(\varphi_{\lambda_n},\psi_{\lambda_n})^T\}$ converges strongly in $W_0^{1,p}(\Omega) \times W_0^{1,q}(\Omega)$ and the limit 
 $$(\varphi_*,\psi_*)^T \in W_0^{1,p}(\Omega_0^a) \times  W_0^{1,q}(\Omega_0^d),$$ 
 is a solution of the eigenvalue problem 
\begin{equation}
\label{a2lim}
\left\{
\begin{array}{l}
  -\Delta_p \varphi_*-b |\varphi|^{\alpha-1}\varphi_*\,| \mathcal{P}\psi_*|^\beta \mathcal{P}\psi_*=\tau_1 \varphi_*\\
  -\Delta_q \psi_*- c |\mathcal{P}\varphi_*|^{\alpha} \mathcal{P}\varphi_*\, |\psi_*|^{\beta-1} \psi_*=\tau_1  \psi_*
  \end{array}
    \right.
\end{equation}
for $(\varphi_*,\psi_*)^T \in W_0^{1,p}(\Omega_0^a) \times  W_0^{1,q}(\Omega_0^d)$ and 
where $\mathcal{P}$ stands for the projection on the subdomain where both potentials vanish at the same time, i.e.
$$\mathcal{P}\phi=\chi_{\Omega_0^{a}\cap \Omega_0^{d}}\phi, \quad \hbox{with}\quad \phi=\varphi_*,\psi_*.$$
Thus, $\mathcal{P}$ is defined to be zero if the intersection is null
$$\Omega_0^{a}\cap \Omega_0^{d}=\emptyset.$$
Furthermore, by the uniqueness of the principal eigenvalue of a singular $p$-Laplacian equation the sequence converges to the eigenfunction associated with the principal eigenvalue
$\tau_1$ for the uncoupled system \eqref{uncop}, i.e. the principal eigenfunction whose components are the corresponding eigenfunctions for each equation in \eqref{uncop}.
Actually, we observe that if that was the case the limiting eigenvalue would be the infimum between the corresponding eigenvalues of the following uncoupled 
system
$$
\left\{
\begin{array}{ll}
  -\Delta_p \varphi_*=\tau_1 |\varphi_*|^{p-2} \varphi_* & \hbox{in}\quad \Omega_0^{a},\\
  -\Delta_q \psi_*=\tau_1 |\psi_*|^{q-2} \psi_* & \hbox{in}\quad \Omega_0^{d},
  \end{array}
    \right.$$
    under homogeneous boundary conditions and such that
    $$\tau_1=\inf\{\tau_1[ -\Delta_p,\Omega_0^{a}],\tau_1[ -\Delta_q,\Omega_0^{d}]\}.$$


\end{document}